\newtheorem{thm}{Theorem}[]
\newtheorem{lem}{Lemma}[section]
\newtheorem{prop}{Proposition}[]
\newtheorem{rmk}{Remark}[section]
\theoremstyle{definition}
\newtheorem{assumption}{Assumption}[section]
\numberwithin{equation}{section} \theoremstyle{remark}
\title[Universality of convergence rate ]{\bf Convergence rate of extreme eigenvalue of Ginibre ensembles to Gumbel distribution.}
\author{Xinchen Hu}
\address{Xinchen Hu\\ School of Mathematical Sciences $\&$ Laboratory of Mathematics and Complex Systems of Ministry of Education, Beijing Normal University, 100875 Beijing, China.}
\email{huxc@mail.bnu.edu.cn}
\author{Y{utao} Ma}
\address{Yutao MA\\ School of Mathematical Sciences $\&$ Laboratory  of Mathematics $\&$ Complex Systems of Ministry of Education, Beijing Normal University, 100875 Beijing, China.} 
\thanks{The research of Yutao Ma was supported in part by NSFC 12171038,  12571149 and 985 Projects.}
\email{mayt@bnu.edu.cn}
\begin{document}
	\maketitle
	
	\begin{abstract}
		Let $X$ be a real $(\beta=1)$ or complex $(\beta=2)$ Ginibre ensemble.  Let $\{\sigma_i\}_{1\le i\le n}$ be the eigenvalues of $X,$ and $Z_n$ be some rescaled version of $\max_i \Re \sigma_i.$ It was proved that $Z_n$ converges weakly to the Gumbel distribution $\Lambda_{\beta}$ with distribution function $e^{-\frac{\beta}{2}e^{-x}}.$ We further prove that  
		$$\sup_{x\in \mathbb{R}}|\mathbb{P}(Z_n \leq x)-e^{-\frac{\beta}{2}e^{-x}}|=\frac{25\log \log n}{4e \log n}(1+o(1))$$
		and 
		$$ W_1\left(\mathcal{L}(Z_n), \Lambda_{\beta}\right)=\frac{25\log \log n}{4\log n}(1+o(1))$$	for sufficiently large $n$, where $\mathcal{L}(Z_n)$ is the distribution of $Z_n$ and $W_1$ is the Wasserstein distance. Similar results hold for the spectral radius $\max_{i} |\sigma_i|.$ Furthermore, the convergence rates of the complex Ginibre ensemble are universal for complex i.i.d. random matrices under certain moment conditions on entries.  
	\end{abstract} 
	
	{\bf Keywords:} Gumbel distribution; rightmost eigenvalue; spectral radius; Wasserstein distance; Berry-Esseen bound.  
	
	{\bf AMS Classification Subjects 2020:} 60G70, 60B20, 60B10

	\section{Introduction}

	Let $X$ be an $n\times n$ matrix with independent and identically distributed (i.i.d.) entries $x_{ij} \stackrel{\text { d }}{=} n^{-1 / 2} \xi$, for some complex or real random variable $\xi$ of mean zero and variance one. When $\xi$ is a complex/real Gaussian random variable, $X$ is called a  complex/real Ginibre ensemble. As the simplest model of non-Hermitian random matrices, the complex Ginibre ensemble plays a fundamental role in theoretical studies and serves as a cornerstone for understanding more complex non-Hermitian systems. Ginibre \cite{Ginibre65} gave the explicit density formula of the complex Ginibre ensemble and proved that its eigenvalues form a determinantal point process (DPP). Forrester and Nagao \cite{ForresterNagao} proved that the eigenvalues of the real Ginibre ensemble also form a Pfaffian point process and later Borodin and Sinclair \cite{Borodin09, Borodin16} gave Pfaffian formulas for the more complicated real case.  Here, we recommend \cite{ByunForrester} for more details around Ginibre ensembles.  The readers are also inferred to Akemann and Phillips \cite{Akemannand2012} for further introduction to correlation kernels.  
	
	Girko \cite{Girko84} proved that the empirical spectral distribution (ESD) converges weakly to the uniform distribution on the unit disk in the complex plane, known as the circular law (in contrast to the semicircle law for Hermitian case). Key contributions to the circular law include \cite{Bai97,Bai93i,Bai93ii,Bai24,Bai88,TaoVu08}. Notably, Tao and Vu \cite{TaoVu09} established a universality principle: the limiting distribution of the ESD remains invariant under arbitrary distributions of $\xi,$ provided mild regularity conditions are satisfied.
	
	Now we review the history on limits of extreme statistics. Denote the eigenvalues of $X$ by $\{\sigma_i\}_{i=1}^{n}$ and set $$\aligned 
	\gamma_n&=\frac{1}{2}\left(\log{n}-5\log{\log{n}}-\log({2\pi^4})\right); \\
	\gamma_n^{\prime}&=\log{n}-2\log{\log{n}}-\log{2\pi}. \endaligned 	
	$$

	For the complex Ginibre ensemble, Rider \cite{Rider03} proved that $$\max_{i}|\sigma_i|=1+\sqrt{\frac{\gamma_n'}{4n}}+\frac{\mathcal{G}_n}{\sqrt{4n \gamma_n'}}$$ and Bender \cite{Bender10} showed that  
	$$\max_{i}\Re\sigma_i=1+\sqrt{\frac{\gamma_n}{4n}}+\frac{\widetilde{\mathcal{G}}_{n}}{\sqrt{4n \gamma_n}},$$ 
	 where both $\mathcal{G}_n$ and $\widetilde{\mathcal{G}}_n$ converge weakly to the Gumbel distribution $\Lambda$ whose distribution function is $e^{-e^{-x}}.$ For the real  Ginibre ensemble, Rider and Sinclair \cite{Rider14} proved that $$\max_{i} |\sigma_i|=1+\sqrt{\frac{\gamma_n'}{4n}}+\frac{\mathcal{G}_{n, 1/2}}{\sqrt{4n \gamma_n'}},$$  
   where $\mathcal{G}_{n, 1/2}$ converges to $\Lambda_{1/2},$ whose  distribution function is $e^{-\frac{1}{2}e^{-x}}.$ For both real and complex Ginibre ensembles,  Cipolloni, Erd\"{o}s, Schr\"{o}der and Xu. \cite{Cipolloni22Directional} proved that 
	\begin{equation}\label{Xurightmostginibre} \mathbb{P}(\max_{i}\Re\sigma_i\le 1+\sqrt{\frac{\gamma_n}{4n}}+\frac{x}{\sqrt{4n \gamma_n}})=e^{-\frac{\beta}{2}e^{-x}}+O(\frac{(\log\log n)^2}{\log n}).\end{equation} 
		
	Furthermore, for a complex random matrix with i.i.d. entries, Cipolloni, Erd\H{o}s and Xu \cite{Cipolloni23Universality} 
	demonstrated that  
	\begin{equation}\label{Xuuniversal}
		\lim_{n\to\infty}\mathbb{P}(\max_{i}|\sigma_i|\le 1+\sqrt{\frac{\gamma_n'}{4n}}+\frac{x}{\sqrt{4n \gamma_n'}})=e^{-e^{-x}}
	\end{equation} 
	and \begin{equation}\label{Xuuniversal1} \lim_{n\to\infty}\mathbb{P}(\max_{i}\Re\sigma_i\le 1+\sqrt{\frac{\gamma_n}{4n}}+\frac{x}{\sqrt{4n \gamma_n}})=e^{-e^{-x}}\end{equation}
	for any $x\in\mathbb{R}$ under the following assumption
	\begin{assumption}\label{assumption}
		Assume that $\mathbb{E} \xi=0$,  $\mathbb{E}|\xi|^2=1 $ and $\mathbb{E}\xi^2=0$. Furthermore, for any fixed $k\in \mathbb{N}$,  there exists constant $C_k>0 $ such that
		$$
		\mathbb{E}|\xi^k|\leq C_k . 
		$$
	\end{assumption}
	It is remarkable that \eqref{Xuuniversal} and \eqref{Xuuniversal1} verify the universality of the Gumbel distribution in the sense of pointwise convergence, which confirms the conjecture posted by \cite{ChafaiP}. As mentioned in \cite{Cipolloni23Universality} the convergence rate in \eqref{Xurightmostginibre} can be extended to general i.i.d. matrices satisfying {\bf Assumption} \ref{assumption}, using the comparison theorem obtained in \cite[Theorem 4.1]{Cipolloni23Universality}. Building upon the innovative framework of \cite{Cipolloni23Universality}, particularly leveraging the Green function comparison theorem pioneered therein, and the exact convergence rate, if much slower than $n^{-\epsilon}$ for a sufficiently small $\epsilon>0,$ for the complex Ginibre case could be confirmed to be universal for complex i.i.d. random matrices with entries satisfying   {\bf Assumption} \ref{assumption}. Precisely, 
	setting $$Y_n=\sqrt{4n\gamma_n'}(\max_{i}|\sigma_i|-1-\sqrt{\frac{\gamma_n'}{4n}}),$$
	the key ingredient guaranteeing the universality of the Gumbel distribution is the following inequality 
	\begin{equation}\label{comparison}|\mathbb{P}(Y_{n}\le x)-\mathbb{P}^{\rm Gin}(Y_n\le x)|\lesssim n^{-\epsilon},\end{equation}  which is obtained by Theorem 4.1 in \cite{Cipolloni23Universality}.  Here, $\mathbb{P}$ corresponds to i.i.d. case and $\mathbb{P}^{\rm Gin}$ is with respect to the complex Ginibre ensemble. \eqref{comparison} is still true when $Y_n$ is replaced by $Z_n,$ where 
	$$Z_n=\sqrt{4 n \gamma_n}\left[\max_i\Re\sigma_i-1-\sqrt{\frac{\gamma_n}{4 n}}\right] \quad\text{with} \quad \gamma_n=\frac{1}{2}\left(\log{n}-5\log{\log{n}}-\log({2\pi^4})\right). \quad $$
 		
Recently, for the complex Ginibre ensemble, the second author and Meng \cite{MaMeng25}, via the Kostlan's observation (\cite{Kostlan1992}),  obtained that 
	\begin{equation*}\label{Mamengberry}
		\sup_{x\in \mathbb{R}} |\mathbb{P}(Y_n\le x)-e^{-e^{-x}}|=\frac{2\log \log n}{e \log n}(1+o(1))
	\end{equation*} 
	and
	\begin{equation*}
		W_1 (\mathcal{L}(Y_n), \Lambda)=\frac{2\log \log n}{\log n}(1+o(1)).
	\end{equation*}
	Here, $\mathcal{L}(Y_n)$ is the distribution of  $Y_n$ and $W_1(\mathcal{L}(\zeta), \mathcal{L}(\eta))$ is the Wasserstein distance between the distributions of two random variables $\zeta$ and $\eta$ on $\mathbb{R}$ given by  $$W_1(\mathcal{L}(\zeta), \mathcal{L}(\eta))=\int_{-\infty}^{+\infty} |\mathbb{P}(\zeta\le x)-\mathbb{P}(\eta\le x)| dx.$$
	
The exact order of convergence $\frac{\log\log n}{\log n}$ is much lower than $n^{-\epsilon}$, suggesting that the convergence rate for $Y_n$ is likely universal in the i.i.d. case. However, due to methodological limitations, the analysis in \cite{MaMeng25} does not cover the rightmost eigenvalue of Ginibre ensembles or the spectral radius of real Ginibre matrices.

The primary objective of this paper is to bridge this gap. We first establish precise convergence rates for rightmost eigenvalues of Ginibre ensembles. 	
	\begin{thm} \label{TH1}
		Let $X$ be a  real or complex Ginibre ensemble  and let $\sigma_1,\cdots ,\sigma_n$  be the same as above.	 
		Let 
		$$Z_n=\sqrt{4 n \gamma_n}\left[\max_i\Re\sigma_i-1-\sqrt{\frac{\gamma_n}{4 n}}\right] \quad\text{with} \quad \gamma_n=\frac{1}{2}\left(\log{n}-5\log{\log{n}}-\log({2\pi^4})\right). \quad $$
		Then, for  sufficiently large $n$, we have		$$\aligned
		\sup_{x\in \mathbb{R}}|\mathbb{P}(Z_n\le x)-e^{-\frac{\beta}{2}e^{-x}}|&=\frac{25\log \log n}{4e\log n}(1+o(1)); \\
		W_{1}\left(\mathcal{L}(Z_n),\Lambda_{\beta}\right)&=\frac{25\log \log n }{4\log n}(1+o(1)).
		\endaligned $$  
	\end{thm}
Now we state the parallel one for the spectral radius. 
	\begin{thm}\label{TH2}
		Let $X$ be a  real or complex Ginibre ensemble  and let $\sigma_1,\cdots ,\sigma_n$  be the same as above. Let 
		$$Y_n=\sqrt{4 n \gamma_n^{\prime}}[\max_i\left|\sigma_i\right|-1-\sqrt{\frac{\gamma_n^{\prime}}{4 n}}] \quad\text{with} \quad \gamma_n^{\prime}=\log{n}-2\log{\log{n}}-\log{2\pi}. \quad $$	
		Then, we have	
				$$\aligned
		\sup_{x\in \mathbb{R}}|\mathbb{P}(Y_n\le x)-e^{-\frac{\beta}{2}e^{-x}}|&=\frac{2\log \log n}{e\log n}(1+o(1));\\
		W_{1}\left(\mathcal{L}(Y_n),\Lambda_{\beta}\right)&=\frac{2\log \log n }{\log n}(1+o(1))
		\endaligned $$
		for  sufficiently large $n.$
 	\end{thm}
Hereafter, we use the same notations $Y_n$ and $Z_n$  for both real and complex Ginibre ensembles, provided that no confusion arises and we  use $\mathbb{P}^{\rm Gin}$ only when complex i.i.d. case appears.

%	In order to avoid ambiguity in the statements, we use $F_n$ to denote the distribution function of $Z_n$ or $Y_n$ relating to the  complex matrix with i.i.d. entries and $F_n^{\rm Gin}$ for the complex  or real Ginibre ensemble. Similar notations will be used for $\mathbb{P}, \; \mathbb{E}$ and $\mathbb{P}^{\rm Gin}, \; \mathbb{E}^{\rm Gin}.$ 
	
	We will provide detailed proofs of Theorem \ref{TH1} and Theorem \ref{TH2}, then outline how the Green function comparison theorem (introduced in \cite[Theorem 4.1]{Cipolloni23Universality}) extends exact convergence results to general complex IID case.  For extreme eigenvalue statistics of non-Hermitian random matrices, the correlation kernel has been widely applied(see \cite{AB10, AP14, Bender10, Borodin09, Rider14, Cipolloni22Directional}, among others).  In particular, for the rightmost eigenvalues of Ginibre ensembles, Cipolloni, Erd\"{o}s, Schr\"{o}der, and Xu  \cite{Cipolloni22Directional} presented a concise proof based on estimates of the correlation kernel and derived a convergence rate of  $O\big(\frac{(\log\log n)^2}{\log n}\big)$.  Here, building upon the framework of \cite{Cipolloni22Directional},  we refine their analysis with sharper estimates to establish Theorem \ref{TH1} and we use the same method to derive Theorem \ref{TH2}.  
	
	Hereafter, we use frequently $t_n=O(z_n)$ or $t_n=o(z_n)$ if $\lim_{n\to\infty}\frac{t_n}{z_n}=c\neq 0$  or $\lim_{n\to\infty}\frac{t_n}{z_n}=0$. For any $f,g\geq 0$, we write $f\lesssim g$ if $f\leq Cg$ for some constant $C>0$. We also use $t_n\ll z_n$ or equivalently $z_n\gg t_n$ to represent $\lim_{n\to\infty}\frac{t_n}{z_n}=0.$ 
	
	For the clarity of the presentation of the proof, we outline the sketch of the proofs of Theorems \ref{TH1} and \ref{TH2}. 
	For both Berry-Esseen bound and $L^1$-Wasserstein distance, we cut $\mathbb{R}$ as the union of three intervals $$\mathbb{R}=(-\infty, -\frac14\log\log n)\cup [-\frac14\log\log n, (\log n)^{1/4}]\cup ((\log n)^{1/4}, +\infty)$$ and we will demonstrate that the precise convergence rates are attained on the interval $[-\frac{1}{4}\log\log n,\ (\log n)^{1/4}]$, while the remaining two terms are negligible. Given certain terms $W_n^{(t)}$, which depends on the correlation kernel and the corresponding indicator function related to $Z_n$ (to be specified later), one gets  
	$$\aligned \mathbb{P}(Z_n\le t)={\rm det}(1-W_n^{(t)}).\endaligned$$ Moreover, it has been verified that 
	$$
		|\det(1-W_n^{(t)})-\exp(-\operatorname{Tr}(W_n^{(t)}))|\leq \|W_n^{(t)}\|_2 \exp\big\{\frac12(\|W_n^{(t)}\|_2+1)^2-\operatorname{Tr}(W_n^{(t)})\big\}
	$$ uniformly on $|t|\lesssim (\log n)^{1/4}.$ Here $\|W_n^{(t)}\|_2$ and $\operatorname{Tr}(W_n^{(t)})$ will be introduced later. 
	Thereby, once  $\|W_n^{(t)}\|_2\ll \frac{\log \log n}{\log n}$ uniformly on $|t|\lesssim (\log n)^{1/4},$ we see 
	$$|\mathbb{P}(Z_n\le t)-e^{-e^{-t}}|=|e^{-\operatorname{Tr}(W_n^{(t)})}-e^{-e^{-t}}|+o(\frac{\log\log n}{\log n})$$  
	uniformly on $|t|\lesssim (\log n)^{1/4}.$ The core ingredients are the proper upper bounds  on $\|W_n^{(t)}\|_2$  and precise asymptotical  expression of  $\operatorname{Tr}(W_n^{(t)}).$
	Similar treatment will be done for the spectral radius case. We leave the reason for the choice of both $\ell_1(n)$ and $\ell_2(n)$ later.    
	
	Therefore, the next section is devoted to the estimates on $\|W_n^{(t)}\|_2$  and precise asymptotical  expressions of  $\operatorname{Tr}(W_n^{(t)}).$ And the third section focuses on the proofs of Theorems \ref{TH1} and \ref{TH2} and we state the universality of convergence rate for extreme eigenvalues in the forth section.  At last, we introduce new convergences for both $Z_n$ and $Y_n$ of order $\frac{1}{\log n}$ under new scales $\gamma_n$ and $\gamma_n',$ respectively.  
		
	\subsection*{Acknowledgment} The authors wish to extend their sincere gratitude to Professors Forrester and Yuanyuan Xu for their invaluable suggestions on an earlier version of this manuscript. Their insightful comments have significantly improved the clarity and rigor of the presentation, and we are deeply grateful for their contributions to the development of this work.  
	
	\section{Preparation work} 
	This section is devoted to the precise asymptotic on ${\rm Tr}(W_n^{(t)})$ and $\|W_n^{(t)}\|_2$ and other similar terms, which is mainly based on the framework of \cite{Cipolloni22Directional} 
	and the readers are inferred to \cite{Cipolloni22Directional} for further technical details. 
	
We first work on the complex Ginibre ensemble. 
\subsection{Correlation kernel of Complex Ginibre}

	It was proved in \cite{Rider03} that the eigenvalues of a complex Ginibre matrix has joint probability density as follows 
	\begin{equation*}\rho_n(\boldsymbol{z})=\rho_n(z_1,\ldots,z_n):=\frac{n^n}{\pi^n1!\cdots n!}\exp\left(-n\sum_i\lvert z_i\rvert^2\right)\prod_{i<j}\left(n\lvert z_i-z_j\rvert^2\right)\end{equation*}
	and  Ginibre in \cite{Ginibre65} concludes that these eigenvalues form a determinantal point process with 
	$$
	\rho_n(\boldsymbol{z})=\frac{n^n}{\pi^nn!}e^{-n|\boldsymbol{z}|^2}\det\left(K_n\left(z_i,z_j\right) \right)_{i,j=1}^n,
	$$
	where the correlation kernel is given by $$K_n(z,w):=\sum_{l=0}^{n-1}\frac{(nz\bar{w})^l}{l !}$$ with $\bar{w}$ being the complex conjugate of $w.$ An important formula says 
	\begin{equation}\label{expressKn} 
		K_n(z,w)=e^{n z\bar{w}}\frac{\Gamma(n,nz\bar{w})}{\Gamma(n)},
	\end{equation}
	where, $\Gamma$ is the Gamma function and $\Gamma(s, z)$ is the incomplete Gamma function which is given by $$
	\Gamma(s,z) =\int_z^{\infty}t^{s-1}e^{-t} \mathrm{~ d} t.
	$$
	
	We now present several conclusions regarding $K_n,$ which will be instrumental in the proof of the complex Ginibre. The first lemma is borrowed from \cite{Cipolloni22Directional}.
	\begin{lem}\label{lemma1}
		Given a function $g:\mathbb{C}\to[0,1]$ and let $K_n$ be defined as above.  Define  $$
		\widetilde{K}_n(z,w)=\frac{n}{\pi}e^{-n\frac{|z|^2+|w|^2}{2}}K_n(z,w).
		$$ Then, 
		$$ 
		\mathbb{E}\prod_{i=1}^{n}(1-g(\sigma_i))=\det \left(1-\sqrt{g} \widetilde{K}_n \sqrt{g}\right).
		$$
		
	\end{lem}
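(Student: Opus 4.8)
The plan is to recognize $\widetilde K_n$ as the integral kernel of a rank-$n$ orthogonal projection on $L^2(\mathbb C, d^2z)$ and then unfold the Fredholm determinant directly against the explicit joint density of the Ginibre eigenvalues. First I would record the orthonormal decomposition $\widetilde K_n(z,w)=\sum_{l=0}^{n-1}\phi_l(z)\overline{\phi_l(w)}$ with $\phi_l(z)=\sqrt{n/\pi}\,e^{-n|z|^2/2}(\sqrt n\,z)^l/\sqrt{l!}$; a polar-coordinate computation gives $\int_{\mathbb C}\phi_l\overline{\phi_m}\,d^2z=\delta_{lm}$, so $\widetilde K_n$ represents the orthogonal projection $P$ onto $\mathrm{span}\{\phi_0,\dots,\phi_{n-1}\}$, in particular it is self-adjoint, finite rank, with $\mathrm{Tr}\,\widetilde K_n=n$ and reproducing property $\int_{\mathbb C}\widetilde K_n(z,u)\widetilde K_n(u,w)\,d^2u=\widetilde K_n(z,w)$. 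Next I would rewrite the joint density: pulling the factor $n/\pi$ out of each of the $n$ rows of $\det(K_n(z_i,z_j))$ (which accounts for $n^n/\pi^n$) and distributing $e^{-n\sum_i|z_i|^2}=\prod_i e^{-n|z_i|^2/2}\prod_j e^{-n|z_j|^2/2}$ into the rows and columns via multilinearity of the determinant shows $\rho_n(\boldsymbol z)=\frac{1}{n!}\det(\widetilde K_n(z_i,z_j))_{i,j=1}^n$, which by Cauchy--Binet (equivalently $\det(AA^*)=|\det A|^2$ for the square matrix $A=(\phi_l(z_i))_{1\le i\le n,\,0\le l\le n-1}$) equals $\frac{1}{n!}\,|\det(\phi_l(z_i))|^2$.

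Then I would compute $\mathbb E^{\rm Gin}\prod_{i=1}^n(1-g(\sigma_i))=\frac{1}{n!}\int_{\mathbb C^n}\prod_i(1-g(z_i))\,|\det(\phi_l(z_i))|^2\prod_i d^2z_i$. Expanding both determinants as signed sums over permutations $\pi,\sigma$ and invoking Fubini (legitimate since $0\le g\le 1$ keeps the integrand bounded and the $\phi_l$ are Schwartz), the integral factorizes over the coordinates and each factor becomes the entry $M_{ab}:=\int_{\mathbb C}(1-g)\phi_a\overline{\phi_b}\,d^2z=\delta_{ab}-G_{ab}$ of the $n\times n$ matrix $M=I_n-G$, where $G_{ab}:=\int_{\mathbb C}g\,\phi_a\overline{\phi_b}\,d^2z$ for $0\le a,b\le n-1$. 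The standard collapse of the double permutation sum (set $\tau=\sigma\pi^{-1}$; the $n!$ cancels) yields $\mathbb E^{\rm Gin}\prod_i(1-g(\sigma_i))=\det(I_n-G)$. Finally I would match this with the Fredholm side: writing $A:L^2(\mathbb C)\to\mathbb C^n$, $(Af)_l=\langle\phi_l,\sqrt g\,f\rangle$, one checks $A^*A=\sqrt g\,\widetilde K_n\,\sqrt g$ as an operator on $L^2(\mathbb C)$ and $AA^*=G$ on $\mathbb C^n$, so Sylvester's determinant identity $\det(1-A^*A)=\det(1-AA^*)$ (valid since $A$ is finite rank) gives $\det(1-\sqrt g\,\widetilde K_n\,\sqrt g)=\det(I_n-G)$, completing the proof.

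The only points needing care are the well-definedness of the Fredholm determinant (immediate from finite rank and $0\le g\le 1$, so $\sqrt g\,\widetilde K_n\,\sqrt g$ is trace class) and the combinatorial bookkeeping in the permutation-sum step; there is no substantial analytic obstacle, which is why the lemma is quoted from \cite{Cipolloni22Directional}. An equivalent route that bypasses Sylvester's identity is to expand $\det(1-\sqrt g\,\widetilde K_n\,\sqrt g)$ as its Fredholm series (which terminates at $k=n$, since $(\widetilde K_n(x_i,x_j))_{i,j=1}^k$ has rank $\le n$ and its determinant vanishes pointwise for $k>n$) and compare it term by term with the inclusion--exclusion expansion $\mathbb E^{\rm Gin}\prod_i(1-g(\sigma_i))=\sum_{k=0}^n\frac{(-1)^k}{k!}\int_{\mathbb C^k}\prod_{i=1}^k g(x_i)\,\det(\widetilde K_n(x_i,x_j))_{i,j=1}^k\prod_i d^2x_i$, which follows from the symmetry of $\rho_n$ together with the determinantal form of the $k$-point correlation functions obtained by integrating out $n-k$ variables using the reproducing property of $\widetilde K_n$.
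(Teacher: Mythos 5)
Your proof is correct. Note that the paper does not prove this lemma at all --- it is quoted verbatim from \cite{Cipolloni22Directional}, so there is no in-paper argument to compare against. What you supply is the standard derivation: the orthonormal decomposition $\widetilde K_n(z,w)=\sum_{l=0}^{n-1}\phi_l(z)\overline{\phi_l(w)}$, the rewriting $\rho_n=\frac{1}{n!}\det(\widetilde K_n(z_i,z_j))=\frac1{n!}|\det(\phi_l(z_i))|^2$, the Andreief/Gram identity collapsing the double permutation sum to $\det(I_n-G)$, and Sylvester's identity $\det(1-A^*A)=\det(1-AA^*)$ to pass from the $n\times n$ matrix $G$ to the operator $\sqrt g\,\widetilde K_n\sqrt g$; all steps check out (orthonormality of the $\phi_l$, the rank argument terminating the Fredholm series at $k=n$, and the trace-class/finite-rank justification of the determinant are each verified correctly). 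The only cosmetic caveat is that $\det(I-AA^*)$ produces $G$ up to transposition depending on your inner-product convention, which is harmless since $\det$ is transpose-invariant.
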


	\subsubsection{Estimates required for Rightmost eigenvalues}
	
	Recall  $$
	Z_{n}=\sqrt{4n \gamma_n}\big[\max_i \Re  \sigma_i-1-\sqrt{\frac{\gamma_n}{4n}}\big].
	$$
	 
	Next, we connect the probability $\mathbb{P}(Z_{n} \leq t)$ with $\sqrt{g}\widetilde{K}_n \sqrt{g}$ for some particular function $g.$ For that aim, we choose $g=\chi_{A(t)}^{}$ with the set $A(t)$ being defined as \begin{equation}\label{A_t rightmost}
			A(t):=\left\{z \in \mathbb{C} \left\lvert\, \Re z \geq 1+\sqrt{\frac{\gamma_n}{4 n}}+\frac{t}{\sqrt{4 \gamma_n n}}\right.\right\}
	\end{equation}and set $W_n^{(t)}=\sqrt{\chi_{A(t)}^{}}\widetilde{K}_n\sqrt{\chi_{A(t)}^{}}$ for simplicity.  
	For any $t\in \mathbb{R}$, we have 
	$$\mathbb{P}(Z_{n} \leq t)=\mathbb{P}\big(\cap_{i=1}^n \{\sigma_i \notin A(t)\} \big)=\mathbb{E}\big[\prod_{i=1}^{n}\left[1-\chi_{A(t)}^{} (\sigma_i)\right]\big]=\det(1-W_n^{(t)}),$$ whence, 
	\begin{equation}\label{diff}
		\begin{aligned}
			\big|\mathbb{P}(Z_{n} \leq t)-e^{-e^{-t}}\big|
			&=\big|\det(1-W_n^{(t)}) -e^{-e^{-t}}\big|.\\
		\end{aligned}
	\end{equation}  
	Inspired by previous works on the exact convergence rate of spectral radius both for the Ginibre case and the large chiral non-Hermitian random matrix in \cite{MaMeng25} and \cite{MaWang25}, we would first concentrate our efforts on the most difficult estimate $\big|\mathbb{P}(Z_{n} \leq t)-e^{-e^{-t}}\big|$ for $|t|\lesssim (\log n)^{1/4}.$
	
	The expression (7.11) in \cite{Gohberg} provides an estimate for $\det(1-W_n^{(t)}),$ stating that   
	\begin{equation}\label{E2}
		|\det(1-W_n^{(t)})-\exp(-\operatorname{Tr}(W_n^{(t)}))|\leq \|W_n^{(t)}\|_2 \exp\big\{\frac12(\|W_n^{(t)}\|_2+1)^2-\operatorname{Tr}(W_n^{(t)})\big\},
	\end{equation} 
	where  
	$$\aligned \operatorname{Tr}(W_n^{(t)}):&=\int_{\mathbb{C}} W_n^{(t)}(z, z)\,\mathrm{d}^2 z=\int_{A(t)} \widetilde{K}_n(z, z) d^2 z ; \\
	\|W_n^{(t)}\|_2^2:&=\int_{\mathbb{C}^2}|W_{n}^{(t)}(z, w)|^2\,\mathrm{d} z \mathrm{d} w=\int_{A^2(t)}|\widetilde{K}_n(z, w)|^2\,\mathrm{d} z \mathrm{d} w. \endaligned 
	$$
	It was proved in \cite{Cipolloni22Directional} that 		\begin{equation}\label{quote formula4}
		\left\|W_n^{(t)}\right\|_2 \lesssim e^{-\sqrt{\log n} / 32}
	\end{equation} 
	uniformly on $|t|\leq
	\frac{\sqrt{\log n}}{10}.$ This, with \eqref{E2} and the fact ${\rm Tr}(W_n^{(t)})\ge 0,$ ensures that $$|\det(1-W_n^{(t)})-\exp(-\operatorname{Tr}(W_n^{(t)}))|\lesssim e^{-\sqrt{\log n} / 32}.$$ 
	Therefore, using 
	the triangle inequality, we have  \begin{equation}\label{Keydecom}
		\begin{aligned}
		\left|\mathbb{P}(Z_{n} \leq t)-e^{-e^{-t}}\right|=|e^{-{\rm Tr}(W_n^{(t)})}-e^{-e^{-t}}|+O(e^{-\sqrt{\log n} / 32})			
		\end{aligned}
	\end{equation} 
	and then the precise asymptotic on $\operatorname{Tr}(W_n^{(t)})$ is in need now. 
	
	In \cite{Rider14, Cipolloni22Directional}, they analyze 
	$\widetilde{K}_n(z, z)$ by setting \begin{equation}\label{z} z=1+\sqrt{\frac{\gamma_n}{4n}}+\frac{x}{\sqrt{4\gamma_n n}}+\frac{iy}{(\gamma_n n)^{\frac{1}{4}}}.\end{equation}  
	Next, we first derive an asymptotic expansion of $\widetilde{K_n}(z, z)$  in the specified regime $|x|+y^2\lesssim (\log n)^{1/4},$ where $x, y, z$ are interrelated via \eqref{z}. This serves as our fundamental tool for calculating $\operatorname{Tr}(W_n^{(t)}).$ 
	
	\begin{lem}\label{RY}
		Let $\widetilde{K}_n$ be the same as above and $z$ satisfy \eqref{z}. In the regime $|x|+y^2 \lesssim (\log n)^{1/4},$ we have
		$$
		\frac{\widetilde{K}_n(z, z)}{2(\gamma_n n)^{3 / 4}}
		=\frac{ e^{-x-y^2}}{\sqrt{\pi}}(\frac{\log n}{2\gamma_n})^{5/4}\big(1-\gamma_n^{-1}(1+x+y^2+\frac12(x+y^2)^2)\big)\left(1+O((\log n)^{-1})\right) 
		$$ 
		for sufficiently large $n.$
	\end{lem}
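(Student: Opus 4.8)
The plan is to reduce Lemma~\ref{RY} to the moderate-deviation asymptotics of the incomplete Gamma function on the diagonal, sharpening the analysis of $\widetilde K_n(z,z)$ carried out in \cite{Rider14}. Since $z\bar z=|z|^2$ is real, \eqref{expressKn} gives $K_n(z,z)=e^{n|z|^2}\Gamma(n,n|z|^2)/\Gamma(n)$, so the two Gaussian prefactors in the definition of $\widetilde K_n$ cancel and $\widetilde K_n(z,z)=\tfrac{n}{\pi}\,\Gamma(n,n\lambda_n)/\Gamma(n)$ with $\lambda_n:=|z|^2$. First I would insert \eqref{z} and square, getting the exact identity
$$\lambda_n=1+\sqrt{\tfrac{\gamma_n}{n}}+\frac{x+y^2}{\sqrt{\gamma_n n}}+\frac{\gamma_n}{4n}+\frac{x}{2n}+\frac{x^2}{4\gamma_n n},$$
so that on $|x|+y^2\lesssim(\log n)^{1/4}$ one has $\lambda_n-1=\sqrt{\gamma_n/n}\,(1+\tfrac{x+y^2}{\gamma_n})+O(\gamma_n/n)$; in particular $\lambda_n\downarrow1$ while $n(\lambda_n-1)^2=\gamma_n(1+o(1))\to\infty$, the moderate-deviation window for $\Gamma(n,n\lambda)/\Gamma(n)$.

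In that window I would invoke the uniform asymptotics of the incomplete Gamma function --- equivalently, Laplace's method (Watson's lemma at the endpoint) for $\Gamma(n,n\lambda)=(n\lambda)^ne^{-n\lambda}\int_0^\infty(1+s)^{n-1}e^{-n\lambda s}\,ds$ together with Stirling's formula --- to record that, writing $\phi(\lambda):=\lambda-1-\log\lambda$,
$$\frac{\Gamma(n,n\lambda_n)}{\Gamma(n)}=\frac{e^{-n\phi(\lambda_n)}}{\sqrt{2\pi n}\,(\lambda_n-1)}\Big(1-\frac{1}{n(\lambda_n-1)^2}+O\big((\log n)^{-3/2}\big)\Big).$$
The point to watch is that the correction $-1/(n(\lambda_n-1)^2)$ is of exact order $\gamma_n^{-1}$ and has to be retained: dropping it would lose one of the three $\gamma_n^{-1}$-contributions that build the bracket in the statement.

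Next I would feed the expansion of $\lambda_n$ into this formula. For the exponent, every term of $\phi(\lambda_n)=\tfrac12(\lambda_n-1)^2-\tfrac13(\lambda_n-1)^3+\cdots$ beyond the quadratic, and the $O(\gamma_n/n)$ piece of $\lambda_n-1$, contribute $o(\gamma_n^{-1})$ after multiplication by $n$, so $n\phi(\lambda_n)=\tfrac{\gamma_n}{2}(1+\tfrac{x+y^2}{\gamma_n})^2+o(\gamma_n^{-1})=\tfrac{\gamma_n}{2}+(x+y^2)+\tfrac{(x+y^2)^2}{2\gamma_n}+o(\gamma_n^{-1})$, whence $e^{-n\phi(\lambda_n)}=e^{-\gamma_n/2}e^{-(x+y^2)}\big(1-\tfrac{(x+y^2)^2}{2\gamma_n}+O((\log n)^{-1})\big)$; similarly $(\lambda_n-1)^{-1}=\sqrt{n/\gamma_n}\,\big(1-\tfrac{x+y^2}{\gamma_n}+O((\log n)^{-3/2})\big)$ and $\big(n(\lambda_n-1)^2\big)^{-1}=\gamma_n^{-1}+O((\log n)^{-7/4})$. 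Collecting the constant $\tfrac{n}{\pi}\cdot\tfrac{1}{\sqrt{2\pi n}}\cdot\sqrt{n/\gamma_n}=\tfrac{n}{\pi\sqrt{2\pi\gamma_n}}$, using $e^{-\gamma_n/2}=2^{1/4}\pi\,n^{-1/4}(\log n)^{5/4}$ and the elementary identity $\tfrac{n}{\pi\sqrt{2\pi\gamma_n}}e^{-\gamma_n/2}=2(\gamma_n n)^{3/4}\cdot\tfrac{1}{\sqrt\pi}\big(\tfrac{\log n}{2\gamma_n}\big)^{5/4}$, and noting $(1-\gamma_n^{-1})(1-\tfrac{x+y^2}{\gamma_n})(1-\tfrac{(x+y^2)^2}{2\gamma_n})=1-\gamma_n^{-1}\big(1+x+y^2+\tfrac12(x+y^2)^2\big)+O((\log n)^{-3/2})$, would produce exactly the asserted expansion, all $O$-terms being uniform over $|x|+y^2\lesssim(\log n)^{1/4}$.

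I expect the main obstacle to be bookkeeping rather than any single hard estimate: (a) one must extract precisely the right next-to-leading term of $\Gamma(n,n\lambda)/\Gamma(n)$ in the regime $n(\lambda_n-1)^2\to\infty$ --- where the complementary-error-function part and the algebraic tail term are of the same exponential size and partly cancel, so some care is needed to see the sum collapse to $\tfrac{e^{-n\phi}}{\sqrt{2\pi n}(\lambda_n-1)}$ up to the stated correction --- and (b) one must check that the three sources of $\gamma_n^{-1}$-corrections (from $e^{-n\phi(\lambda_n)}$, from $(\lambda_n-1)^{-1}$, and from the incomplete-Gamma correction) assemble exactly into $-\gamma_n^{-1}(1+x+y^2+\tfrac12(x+y^2)^2)$, while everything else (higher Taylor terms of $\phi$ and of $\log(1+s)$, the Stirling remainder, the $n(\lambda_n-1)$ versus $n(\lambda_n-1)+1$ discrepancy, and the second-order part of $e^{-(x+y^2)^2/(2\gamma_n)}$) is genuinely $O((\log n)^{-1})$ uniformly in $x$ and $y$ over the stated range.
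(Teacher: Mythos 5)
Your proposal is correct and follows essentially the same route as the paper: reduce $\widetilde K_n(z,z)$ via \eqref{expressKn} to $\tfrac{n}{\pi}\,\Gamma(n,n|z|^2)/\Gamma(n)$, invoke the uniform incomplete-Gamma asymptotic in the moderate-deviation window $n(|z|^2-1)^2\asymp\gamma_n\to\infty$, and then Taylor-expand $e^{-n\phi}$, $(|z|^2-1)^{-1}$ and the $\gamma_n^{-1}$ correction to assemble the bracket. The only (cosmetic) difference is that you rederive the incomplete-Gamma expansion from the Laplace/Watson representation $\Gamma(n,n\lambda)=(n\lambda)^n e^{-n\lambda}\int_0^\infty(1+s)^{n-1}e^{-n\lambda s}\,ds$, whereas the paper simply quotes \cite[Lemma 3.2]{Rider14} (with $\mu^2(\lambda)=\phi(\lambda)$) together with the standard $\operatorname{erfc}$ expansion \eqref{erfc}, arriving at the identical intermediate formula $\Gamma(n,n\lambda_n)/\Gamma(n)=\tfrac{\lambda_n e^{-n\phi(\lambda_n)}}{\sqrt{2\pi n}(\lambda_n-1)}\bigl(1-\tfrac{1}{2n\phi(\lambda_n)}+O(\gamma_n^{-2})\bigr)$; note also that your claimed $O((\log n)^{-3/2})$ for the product of the three $\gamma_n^{-1}$-brackets should really be $O((\log n)^{-5/4})$ (the $(x+y^2)^3/\gamma_n^2$ cross-term dominates when $x+y^2\asymp(\log n)^{1/4}$), but this is still $o((\log n)^{-1})$ and does not affect the statement.
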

	\begin{proof} It follows from the definition of $\widetilde{K}_n$ and \eqref{expressKn} that 
		\begin{equation}\label{sumexpress}
			\frac{\widetilde{K}_n(z, z)}{2(\gamma_n n)^{3 / 4}}=\frac{n}{2\pi(\gamma_n n)^{3/4}}\frac{\Gamma(n, n|z|^2)}{\Gamma(n)}.
		\end{equation}
		
		We revisit the asymptotic estimate in \cite[Lemma 3.2]{Rider14} expressed as 
		\begin{equation}\label{2.14}
			\frac{\Gamma(n, n|z|^2)}{\Gamma(n)}=\frac{|z|^2\mu(|z|^2)\operatorname{erfc}(\sqrt{n}\mu(|z|^2))}{\sqrt{2}(|z|^2-1)}\left(1+O\left(\frac{1}{n||z|^2-1|}\right)\right),
		\end{equation}  where  $\mu(x):=\sqrt{x-\log x-1}$ for $x>0.$ 
		The asymptotic expansion \begin{equation}\label{erfc}\operatorname{erfc}(z)=\frac{e^{-z^2}}{\sqrt{\pi}z}\left(1-\frac{1}{2z^2}+O\left(\frac{1}{|z|^4}\right)\right)\end{equation} for $|\arg z|<\frac{3\pi}{4}$ implies that 	
		\begin{equation}\label{uerfc}\aligned \mu(|z|^2)\operatorname{erfc}(\sqrt{n}\mu(|z|^2))&=\frac{1}{\sqrt{n\pi}}e^{-n\mu^2(|z|^2)}\left(1-\frac{1}{2n\mu^2(|z|^2)}+O\left(\frac{1}{n^2 \mu^4(|z|^2)}\right)\right).\endaligned \end{equation}
		By definition, 
		\begin{equation}\label{z2express}
			|z|^2=z\bar{z}=1+\sqrt{\frac{\gamma_n}{n}}+\frac{x+y^2}{\sqrt{n\gamma_n}}+\frac{(x+\gamma_n)^2}{4n\gamma_n}
		\end{equation}
		and then $$
		\begin{aligned}
			(|z|^2-1)^2&=(\sqrt{\frac{\gamma_n}{n}}+\frac{x+y^2}{\sqrt{n\gamma_n}}+\frac{(x+\gamma_n)^2}{4n\gamma_n})^2\\
			&=\frac{\gamma_n +2x +2y^2}{n} +\frac{(x+y^2)^2}{n\gamma_n}+O\left(\frac{\gamma_n^{\frac32}}{n^{\frac{3}{2}}}\right).
		\end{aligned}
		$$
		Therefore, together with Taylor's formula $$x-1-\log x=\frac{(x-1)^2}{2}+O((x-1)^3)$$ for $|x-1|$ small enough, one gets  
		$$\mu^2(|z|^2)=|z|^2-1-\log |z|^2=\frac{\gamma_n +2x +2y^2}{2n} +\frac{(x+y^2)^2}{2n\gamma_n}+O\left(\gamma_n^{\frac32}n^{-\frac{3}{2}}\right).$$ 
		Consequently, one obtains by $e^{x}=1+x+O(x^2)$ for $|x|$ sufficiently small that 
		$$\aligned e^{-n\mu^2({|z|^2})}&=e^{-\frac{\gamma_n}{2}-x-y^2-\frac{(x+y^2)^2}{2\gamma_n}} (1+O(n^{-1/2}\gamma_n^{3/2}))\\
		&=e^{-\frac{\gamma_n}{2}-x-y^2}(1-\frac{(x+y^2)^2}{2\gamma_n})\big(1+O(\gamma_n^{-1})\big)\endaligned $$ 	
		since $|x|+y^2\lesssim (\log n)^{1/4}$ and similarly
		$$\aligned 
		\frac{1}{2n\mu^2(|z|^2)}&=\left(\gamma_n+2x+2y^2+(x+y^2)^2\gamma_n^{-1}+O(\gamma_n^{3/2}n^{-3/2})\right)^{-1}\\
		&=\gamma_n^{-1}(1-2\gamma_n^{-1}(x+y^2))\big(1+o(\gamma_n^{-1})\big). 
		\endaligned $$ 
		Plugging these two expressions into the formula \eqref{uerfc}, we get in further that 
		$$
		\begin{aligned}
			\mu(|z|^2)\operatorname{erfc}(\sqrt{n}\mu(|z|^2))&=\frac{1}{\sqrt{n \pi}}e^{-\frac{\gamma_n}{2}-x-y^2}(1-\frac{(x+y^2)^2+2}{2\gamma_n})(1+O(\gamma_n^{-1})).	
		\end{aligned}
		$$
		Inserting this back into \eqref{2.14} leads to the conclusion that
			\begin{align}
			\frac{\Gamma(n,n|z|^2)}{\Gamma(n)}
			&=\frac{|z|^2}{\sqrt{2\pi n}(|z|^2-1)}e^{-\frac{\gamma_n}{2}-x-y^2}(1-\frac{(x+y^2)^2+2}{2\gamma_n})(1+O(\gamma_n^{-1})) \nonumber\\
			&=\frac{1+O(\gamma_n^{-1})}{\sqrt{2\pi\gamma_n}(1+\gamma_n^{-1}(x+y^2))}e^{-\frac{\gamma_n}{2}-x-y^2}(1-\frac{(x+y^2)^2+2}{2\gamma_n}) \nonumber\\
			&=\frac{1+O( \gamma_n^{-1})}{\sqrt{2\pi\gamma_n}}e^{-\frac{\gamma_n}{2}-x-y^2}(1-\frac{\frac12(x+y^2)^2+1+x+y^2}{\gamma_n}). \label{2.12}
		\end{align}
			This, with \eqref{sumexpress}, implies that 
		$$
		\begin{aligned}
			\frac{\widetilde{K}_n(z, z)}{2(\gamma_n n)^{3 / 4}}
			&=\frac{n^{1/4}(1+O(\gamma_n^{-1}))}{(2\pi)^{3/2} \gamma_n^{\frac{5}{4}}}e^{-\frac12\gamma_n-x-y^2}\big(1-\gamma_n^{-1}\big(1+x+y^2+\frac12(x+y^2)^2\big)\big).
		\end{aligned}
		$$
		Now $$\gamma_n=\frac{\log n -5\log \log n -\log (2\pi^4)}{2} $$ and then 
		\begin{equation}\label{equa} e^{-\frac12\gamma_n}=n^{-1/4} (\log n)^{5/4} (2\pi^4)^{1/4}.\end{equation}
		Therefore, 
		\begin{equation*}
			\begin{aligned}
				\quad \frac{\widetilde{K}_n(z, z)}{2(\gamma_n n)^{3 / 4}}
				=\frac{ e^{-x-y^2}}{\sqrt{\pi}}(\frac{\log n}{2\gamma_n})^{5/4}\big(1-\gamma_n^{-1}(1+x+y^2+\frac12(x+y^2)^2)\big)\left(1+O(\gamma_n^{-1})\right).
			\end{aligned}
		\end{equation*}
		The proof is completed. 
	\end{proof}
	
	Next, we provide an estimate for ${\rm Tr}(W_n^{(t)}).$	
	\begin{lem}\label{Trlem}
		Let  $W_n^{(t)}$ be defined as above with $|t| \lesssim (\log n)^{1/4}.$ Then, 
		$$
		\operatorname{Tr} (W_n^{(t)})=e^{-t}\big(1+\frac{c_n-t^2-5t}{\log n})\left(1+O\left((\log n)^{-1}\right)\right), $$
		where $c_n=(25\log \log n+5\log(2\pi^4)-35)/4.$

	\end{lem}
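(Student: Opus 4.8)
The plan is to rewrite $\operatorname{Tr}(W_n^{(t)})$ in coordinates adapted to Lemma \ref{RY} and then carry out the resulting elementary computation. Since $W_n^{(t)}(z,z)=\chi_{A(t)}(z)\widetilde{K}_n(z,z)$, one has $\operatorname{Tr}(W_n^{(t)})=\int_{A(t)}\widetilde{K}_n(z,z)\,d^2 z$. In the coordinates $(x,y)$ of \eqref{z}, the half-plane $A(t)$ is exactly $\{x\ge t,\ y\in\mathbb{R}\}$, and $\Re z,\Im z$ are affine functions of $x,y$ with scales $(4\gamma_n n)^{-1/2}$ and $(\gamma_n n)^{-1/4}$, so $d^2 z=\tfrac12(\gamma_n n)^{-3/4}\,dx\,dy$. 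Hence
$$\operatorname{Tr}(W_n^{(t)})=\int_{t}^{\infty}\!\int_{-\infty}^{\infty}\frac{\widetilde{K}_n(z,z)}{2(\gamma_n n)^{3/4}}\,dy\,dx,$$
which is precisely the quantity controlled by Lemma \ref{RY}.

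I would then split the domain at $x+y^2=L$ with $L=2(\log n)^{1/4}$: on the bulk $D_1=\{x\ge t,\ x+y^2\le L\}$ one checks (using $|t|\le(\log n)^{1/4}$) that $|x|+y^2\lesssim(\log n)^{1/4}$, so Lemma \ref{RY} applies. Substituting its expansion on $D_1$ and enlarging the integration domain to all of $\{x\ge t,\ y\in\mathbb{R}\}$ — legitimate up to an error $\lesssim(\log n)^{O(1)}e^{-L}$, the integrand being $\lesssim e^{-x-y^2}$ times a low-degree polynomial — reduces the bulk contribution to a combination of Gaussian moment integrals $\int_t^\infty\!\int_{\mathbb{R}}\tfrac{e^{-x-y^2}}{\sqrt\pi}\,P(x,y)\,dy\,dx$ with $P\in\{1,x,y^2,(x+y^2)^2\}$. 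These factor, and using $\tfrac1{\sqrt\pi}\int_{\mathbb{R}}e^{-y^2}dy=1$, $\tfrac1{\sqrt\pi}\int_{\mathbb{R}}y^{2}e^{-y^2}dy=\tfrac12$, $\tfrac1{\sqrt\pi}\int_{\mathbb{R}}y^{4}e^{-y^2}dy=\tfrac34$ together with $\int_t^\infty e^{-x}dx=e^{-t}$, $\int_t^\infty xe^{-x}dx=(t+1)e^{-t}$, $\int_t^\infty x^2e^{-x}dx=(t^2+2t+2)e^{-t}$, one obtains
$$\int_t^\infty\!\int_{-\infty}^{\infty}\frac{e^{-x-y^2}}{\sqrt\pi}\Big(1+x+y^2+\tfrac12(x+y^2)^2\Big)\,dy\,dx=e^{-t}\Big(\tfrac12 t^2+\tfrac52 t+\tfrac{35}{8}\Big).$$
Hence the bulk contributes $\big(\tfrac{\log n}{2\gamma_n}\big)^{5/4}\,e^{-t}\big(1-\gamma_n^{-1}(\tfrac12 t^2+\tfrac52 t+\tfrac{35}{8})\big)\big(1+O((\log n)^{-1})\big)$.

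It then remains to expand the two prefactors and collect the $(\log n)^{-1}$-order terms. From $2\gamma_n=\log n-5\log\log n-\log(2\pi^4)$ one gets $\big(\tfrac{\log n}{2\gamma_n}\big)^{5/4}=1+\tfrac{25\log\log n+5\log(2\pi^4)}{4\log n}+O\big((\log\log n)^2(\log n)^{-2}\big)$ and $\gamma_n^{-1}=\tfrac{2}{\log n}(1+o(1))$, so $\gamma_n^{-1}\big(\tfrac12 t^2+\tfrac52 t+\tfrac{35}{8}\big)=\tfrac{t^2+5t+35/4}{\log n}(1+o(1))$. Multiplying out, the $\log\log n$ and constant contributions combine into $\tfrac14\big(25\log\log n+5\log(2\pi^4)-35\big)=c_n$ and the $t$-dependence into $-(t^2+5t)$, which gives $\operatorname{Tr}(W_n^{(t)})=e^{-t}\big(1+\tfrac{c_n-t^2-5t}{\log n}\big)\big(1+O((\log n)^{-1})\big)$, as claimed.

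Finally the tail $\{x\ge t,\ x+y^2>L\}$ is harmless: from $\operatorname{erfc}\le1$ in \eqref{2.14} (or the probabilistic interpretation of the incomplete-gamma ratio) one has $\widetilde{K}_n(z,z)=\tfrac n\pi\tfrac{\Gamma(n,n|z|^2)}{\Gamma(n)}\le\tfrac n\pi e^{-n\mu^2(|z|^2)}$, and the same Taylor expansion of $\mu^2$ used in the proof of Lemma \ref{RY} gives $n\mu^2(|z|^2)\ge\tfrac{\gamma_n}{2}+x+y^2-o(1)$ on the moderate range (with the even cruder $\mu^2(|z|^2)\gtrsim(|z|^2-1)^2$ beyond it); combined with $e^{-\gamma_n/2}=n^{-1/4}(\log n)^{5/4}(2\pi^4)^{1/4}$ this yields $\int_{x+y^2>L,\,x\ge t}\widetilde{K}_n(z,z)\,d^2 z\lesssim(\log n)^{O(1)}e^{-L}=o\big(e^{-t}(\log n)^{-1}\big)$ since $|t|\le(\log n)^{1/4}<L$, so it is absorbed into the error term. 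The only real difficulty is bookkeeping: one must carry the multiplicative $O((\log n)^{-1})$ error of Lemma \ref{RY} and the secondary corrections of $(\log n/2\gamma_n)^{5/4}$ at the same time and verify that the various $\tfrac{\log\log n}{\log n}$- and $\tfrac1{\log n}$-contributions assemble into exactly $c_n$ and $-(t^2+5t)$ — which is precisely what pins down the quadratic polynomial in $x+y^2$ appearing in Lemma \ref{RY}.
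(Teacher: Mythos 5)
Your argument is correct in substance and follows essentially the same strategy as the paper: pass to the $(x,y)$ coordinates via $d^2z=\tfrac12(\gamma_n n)^{-3/4}\,dx\,dy$, split the half-plane $\{x\ge t\}$ into a bulk region where Lemma~\ref{RY} applies and a tail, compute the bulk integral exactly, and bound the tail. Your Gaussian-moment evaluation of the bulk gives $e^{-t}\big(\tfrac12 t^2+\tfrac52 t+\tfrac{35}{8}\big)$, matching the paper's polynomial in $I_1$, and your expansion of $(\log n/2\gamma_n)^{5/4}$ and $\gamma_n^{-1}$ correctly reassembles into $c_n-t^2-5t$.

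The one genuinely different choice is the tail estimate. The paper partitions into three rectangular pieces $I_1,I_2,I_3$ (with $t_0=4(|t|+(\log n)^{1/4})$) and quotes the black-box bound $\widetilde K_n(z,z)/(\gamma_n n)^{3/4}\lesssim|z|^2 e^{-(x+y^2)/3}$ from \cite{Cipolloni22Directional} to kill $I_2,I_3$. You instead split along $\{x+y^2=L\}$ and re-derive a decay bound from $\Gamma(n,na)/\Gamma(n)\le e^{-n\mu^2(a)}$. That route is sound and more self-contained, but two of the justifications you give for it are imprecise: $\operatorname{erfc}\le1$ applied to \eqref{2.14} does \emph{not} yield $\widetilde K_n(z,z)\le\tfrac n\pi e^{-n\mu^2(|z|^2)}$ (it kills the exponential, not the prefactor) — what works is the Chernoff/Poisson-tail interpretation of the incomplete-gamma ratio that you mention in the parenthetical; and the ``even cruder'' lower bound $\mu^2(|z|^2)\gtrsim(|z|^2-1)^2$ is false for $|z|^2$ bounded away from $1$ (there $\mu^2$ grows only linearly), so it should be replaced by something like $\mu^2(x)\gtrsim\min\{(x-1)^2,x-1\}$. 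Neither slip affects the conclusion, since linear growth of $\mu^2$ already gives more than enough decay, but the wording should be tightened.
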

	\begin{proof} 
		Review $$A(t):=\left\{z \in \mathbb{C} \left\lvert\, \Re z \geq 1+\sqrt{\frac{\gamma_n}{4 n}}+\frac{t}{\sqrt{4 \gamma_n n}}\right.\right\}$$
		and $$z=1+\sqrt{\frac{\gamma_n}{4n}}+\frac{x}{\sqrt{4\gamma_n n}}+\frac{iy}{(\gamma_n n)^{\frac{1}{4}}}.$$ It indicates that 
		$z\in A(t)$ if and only if $x\ge t.$
		We set $t_0=4(|t|+(\log n)^{1/4})$ and decompose the integral below into three parts as 
		$$
		\begin{aligned}
			\operatorname{Tr} (W_n^{(t)}) & =\int_{A(t)} \widetilde{K}_n(z, z) \mathrm{d}^2 z \\
			& =\left(\int_t^{t_0} \int_{y^2<2 t_0}+\int_t^{t_0} \int_{y^2 \geq 2 t_0}+\int_{t_0}^{\infty} \int_{\mathbb{R}}\right)  \frac{\widetilde{K}_n(z, z)}{2(\gamma_n n)^{3 / 4}} \mathrm{~d} y \mathrm{~d} x \\
			& =:I_1+I_2+I_3 .\\
		\end{aligned}
		$$
		Next, we give estimates on $I_1, I_2$ and $I_3$ step by step. 
		
		{\bf Estimate on $I_1.$ } 
		It is clear that $|x|+|y|^2\le 3 t_0$ when $x\in (t, t_0)$ and $y^2\le 2 t_0.$ Lemma \ref{RY} 
		helps us to write  
		\begin{equation}\label{total0}
			\begin{aligned}
				I_1&=\int_t^{t_0} \int_{y^2<2 t_0}\frac{\widetilde{K}_n(z, z)}{2(\gamma_n n)^{3 / 4}} \mathrm{~d} y \mathrm{~d} x\\
				&=\frac{\left(1+O\left(\gamma_n^{-1}\right)\right)}{\sqrt{\pi}}(\frac{\log n}{2\gamma_n})^{5/4}\int_t^{t_0}\int_{y^2<2 t_0} e^{-x-y^2}\bigg(1-\frac{1+x+y^2+\frac12(x+y^2)^2}{\gamma_n}\bigg)\mathrm{~d} y \mathrm{~d} x.
			\end{aligned}
		\end{equation} 
		Using the well-known integrals on $$\int_t^{t_0} x^k e^{-x} dx \quad\text{and}\quad \int_{y^2<2 t_0}y^{2k} e^{-y^2} dy$$ for $k=0, 1, 2,$ we have  
		$$
		\begin{aligned}
			M:&=\frac{1}{\sqrt{\pi}}\int_t^{t_0}\int_{y^2<2 t_0}e^{-x-y^2} \bigg(1-\frac{1+x+y^2+\frac12(x+y^2)^2}{\gamma_n}\bigg)\mathrm{~d} y \mathrm{~d} x\\
			&=\left[(1-\gamma_n^{-1})(e^{-t}-e^{-t_0})-\gamma_n^{-1}\left(e^{-t}(\frac{t^2}2+2t+2)-e^{-t_0}(\frac{t_0^2}2+2t_0+2)\right)\right]{\rm erf}(\sqrt{2t_0})\\
			&\quad-\gamma_n^{-1}(e^{-t}-e^{-t_0})\left[\frac{7}{8}{\rm erf}(\sqrt{2t_0})-e^{-2 t_0}\big(\frac{(2t_0)^{3/2}}{2\sqrt{\pi}}+\frac{7(2t_0)^{1/2}}{4\sqrt{\pi}}\big)\right]\\
			&\quad-\gamma_n^{-1}((t+1)e^{-t}-(t_0+1)e^{-t_0})\left[\frac{1}{2}{\rm erf}(\sqrt{2t_0})-\frac{\sqrt{2t_0}}{\sqrt{\pi}}e^{-2 t_0}\right]
		\end{aligned}
		$$  and then it holds with simple calculus that 
		$$\aligned M=& {\rm erf}(\sqrt{2t_0})\left[e^{-t}\big(1-\gamma_n^{-1} (\frac{t^2}{2}+\frac{5t}{2}+\frac{35}8)\big)-e^{-t_0}\big(1-\gamma_n^{-1} (\frac{t_0^2}{2}+\frac{5t_0}{2}+\frac{35}8)\big)\right]\\
		&\quad\quad\quad\quad\quad\quad+\frac{\sqrt{2t_0}}{2}\gamma_n^{-1}e^{-2t_0}\left[e^{-t}(2t_0+\frac{11}2+2t)-e^{-t_0}(4t_0+\frac{11}2)\right]. \endaligned $$ 
		Thus, using the relationship ${\rm erf}(t)+{\rm erfc}(t)\equiv 1$ and \eqref{erfc}, we know from $t_0=4(|t|+(\log n)^{1/4})$ that
		$${\rm erf}(\sqrt{2t_0})=1+O(t_0^{-1/2}e^{-2 t_0})=1+o((\log n)^{-8}).$$
		Picking up the dominated term in the expression of $M,$ we finally get 
		\begin{equation}\label{total}
			\begin{aligned}
				M&=e^{-t}\big(1-\gamma_n^{-1} (\frac{t^2}{2}+\frac{5t}{2}+\frac{35}8)\big)(1+o((\log n)^{-8})).
			\end{aligned}
		\end{equation}	
		Substituting \eqref{total} into \eqref{total0}, one obtains 
		$$
		I_1=(\frac{\log n}{2\gamma_n})^{5/4}e^{-t}\big(1-\gamma_n^{-1} (\frac{t^2}{2}+\frac{5t}{2}+\frac{35}8)\big)(1+O((\log n)^{-1})).\\
		$$

		{\bf Estimates on $I_2$ and $I_3$. }
		Review the expression (30) in \cite{Cipolloni22Directional} stating that \begin{equation}\label{quote formula5}
			\frac{\widetilde{K}_n(z, z)}{(\gamma_n n)^{3/4}}\lesssim|z|^2 e^{-(x+y^2)/3}
		\end{equation}
		once  $x+y^2\geq 0,$ which is verified for both the regime $x\in (t, t_0)$ and $y^2\ge 2t_0$ and the other case $x\ge t_0$ and $y\in\mathbb{R}.$ Hence, it follows from \eqref{z2express} and \eqref{quote formula5} that $$
		\begin{aligned}
			|I_{2}|&=\int_t^{t_0} \int_{y^2 \geq 2 t_0}\frac{\widetilde{K}_n(z, z)}{2(\gamma_n n)^{3 / 4}} \mathrm{~d} y \mathrm{~d} x&\\
			&\lesssim \int_{t}^{t_0}\int_{y^{2}\geq 2t_{0}}  |z|^2 e^{-\frac{x+y^{2}}{3}}\mathrm{~d} y \mathrm{~d} x\\
			&=\int_{t}^{t_0}\int_{y^{2}\geq 2 t_0} \big(1+\sqrt{\frac{\gamma_n}{n}}+\frac{x+y^2}{\sqrt{n\gamma_n}}+\frac{(x+\gamma_n)^2}{4n\gamma_n} \big)  e^{-\frac{x+y^{2}}{3}}\mathrm{~d} y \mathrm{~d} x.
		\end{aligned}
		$$
		It is ready to see from the fact  $$|\int_t^{t_0} e^{-\frac x3} (1+c_1 x+c_2 x^2) dx|<+\infty$$ that 
		$$
		\begin{aligned}
			|I_2|&\lesssim \int_{y\geq \sqrt{2 t_0}}(2+\frac{y^2}{\sqrt{n\gamma_n}})e^{-\frac{y^{2}}{3}}\mathrm{~d} y\\
			& \lesssim {\rm erfc}\left(\sqrt{\frac{2t_0}{3}}\right) + \frac{\sqrt{2t_0}}{\sqrt{n\gamma_n}} e^{-\frac{2t_0}{3}}\\
			&=O((t_0)^{-1/2} e^{-2t_0/3})\\
			&\ll e^{-t} (\log n)^{-1}.
		\end{aligned}
		$$
		Here, the last inequality is due to the choice $t_0=4(|t|+(\log n)^{1/4}).$
		The upper bound \eqref{quote formula5} and similar argument as for $I_2$ work successfully for $I_3$ to ensure that $$I_3\ll e^{-t}(\log n)^{-1}$$ since 
		$$\int_{\mathbb{R}} e^{-\frac{y^2}{3}} (1+y^2)dy<+\infty \quad \text{and} \quad \int_{t_0}^{\infty} e^{-\frac x3}(1+c_1 x+c_2 x^2) dx\ll e^{-t}(\log n)^{-1}.$$ Putting estimates on $I_1, I_2$ and $I_3,$ we finally get 
		$$
		\operatorname{Tr} (W_n^{(t)})=(\frac{\log n}{2\gamma_n})^{5/4}e^{-t}\big(1-\gamma_n^{-1} (\frac{t^2}{2}+\frac{5t}{2}+\frac{35}8)\big)\left(1+O\left((\log n)^{-1}\right)\right).
		$$ Review $$2\gamma_n=\log n -5\log \log n -\log (2\pi^4).$$ Thus, 
		$$\aligned 
		&\quad(\frac{\log n}{2\gamma_n})^{5/4}\big(1-\gamma_n^{-1} (\frac{t^2}{2}+\frac{5t}{2}+\frac{35}8)\big)\\
		&=(1+\frac{25\log\log n+5\log (2\pi^4)}{4\log n})\big(1-\gamma_n^{-1} (\frac{t^2}{2}+\frac{5t}{2}+\frac{35}8)\big)\left(1+O\left((\log n)^{-1}\right)\right)\\
		&=(1+\frac{25\log\log n+5\log (2\pi^4)-4t^2-20t-35}{4\log n})\left(1+O\left((\log n)^{-1}\right)\right)\\
		&=(1+\frac{c_n-t^2-5t}{\log n})\left(1+O\left((\log n)^{-1}\right)\right),
		\endaligned $$
		where $c_n=(25\log \log n+5\log(2\pi^4)-35)/4.$
		The proof is then completed. 
	\end{proof}

\subsubsection{Estimates related to the spectral radius} 
	
		Recall 	$$
	\mathbb{P}(Y_n\le t)=\mathbb{P}\big(\max_{i} |\sigma_i|\le 1+\sqrt{\frac{\gamma_n^{\prime}}{4n}}+\frac{t}{\sqrt{4\gamma_n^{\prime} n}}\big), $$ where $\gamma_n^{\prime}=\log{n}-2\log{\log{n}}-\log{2\pi} $  and  then we set \begin{equation}\label{A_t spectral radius}
		{A}'(t):=\left\{z \in \mathbb{C} \left\lvert\, |z| \geq 1+\sqrt{\frac{\gamma_n^{\prime}}{4 n}}+\frac{t}{\sqrt{4 \gamma_n^{\prime} n}}\right.\right\}
	\end{equation}
	and $$\overline{W}_n^{(t)}:=\sqrt{\chi_{A'(t)}^{}} \widetilde{K_n}\sqrt{\chi_{A'(t)}^{}}.$$ 
	The same argument implies that 
	$$\mathbb{P}(Y_n\le t)={\rm det}(1-\overline{W}_n^{(t)}).$$ 
Now, we are going to obtain precise asymptotic for 	$\operatorname{Tr} (\overline{W}_n^{(t)})$ and similar upper bound as \eqref{quote formula4} for $\|\overline{W}_n^{(t)}\|_2.$  
	
	\begin{lem} \label{lemma2.4}
		Let $\overline{W}_n^{(t)}$ be defined as above with $|t| \le \sqrt{\log n}$ and set $d_n=2\log\log n+\log(2\pi)$. Then, 
		\begin{equation}\label{Tr or complex ginibre}
			\operatorname{Tr} (\overline{W}_n^{(t)})=				e^{-t}\bigg(1+\frac{d_n-t^2-4t}{\gamma_n'}\bigg)\left(1+O\left((\log n)^{-1}\right)\right)
		\end{equation}
		uniformly on $|t|\lesssim (\log n)^{1/4}$ and \begin{equation}\label{norm of complex ginibre}
			\|\overline{W}_n^{(t)} \|_2\lesssim e^{-\sqrt{\log n}}
		\end{equation}
		uniformly in $n$ and in $|t|\le  \sqrt{\log n}.$
	\end{lem}
	\begin{proof}  By \eqref{sumexpress} and \cite[Lemma 3.2]{Rider14}, we get
	via spherical coordinate that 	
		\begin{align}
			\operatorname{Tr}(\overline{W}_n^{(t)} )&=\int_{A'(t)} \widetilde{K_n}(z,z) \mathrm{d}^2 z \nonumber\\
			&=\int_{|z|\geq 1+\sqrt{\frac{\gamma_n^{\prime}}{4n}}+\frac{t}{\sqrt{{4n\gamma_n^{\prime}}}}}  \frac{n}{\pi} \frac{\Gamma(n,n|z|^2)}{\Gamma(n)}\mathrm{d}^2 z \nonumber\\
			&=2 n\int_{1+\sqrt{\frac{\gamma_n^{\prime}}{4n}}+\frac{t}{\sqrt{{4n\gamma_n^{\prime}}}}}^{+\infty} \frac{r \Gamma(n, n \,r^2)}{\Gamma(n)}\mathrm{d} r. \label{Trspec}
			 \end{align}
		
Using the substitution $x=r^2$ and rewriting the RHS of \eqref{Trspec} as a double integral  and then changing the order of integration, we have 
$$\operatorname{Tr}(\overline{W}_n^{(t)} )=\Gamma^{-1}(n) \int_{n a}^{+\infty}x^{n-1}e^{-x}(x-na) dx=\Gamma^{-1}(n)\big(\Gamma(n+1, na)-na\Gamma(n, na)\big)$$ and then we get in further via the relationship $$\Gamma(n+1, u)=n\Gamma(n, u)+u^n e^{-u}$$	that 	
\begin{equation}\label{Trsum}\operatorname{Tr}(\overline{W}_n^{(t)} )=\Gamma^{-1}(n)\big(-n(a-1)\Gamma(n, na)+(na)^{n}e^{-na}\big).\end{equation}		
Here, we set for simplicity $$a_t:=(1+\sqrt{\frac{\gamma_n^{\prime}}{4n}}+\frac{t}{\sqrt{{4n\gamma_n^{\prime}}}})^2=1+\sqrt{\frac{\gamma_n^{\prime}}{n}}+\frac{t}{\sqrt{n\gamma_n' }}+\frac{(\gamma_n'+t)^2}{4n\gamma_n'}$$ and then 
\begin{equation}\label{amu}\aligned n\mu^2(a_t)=n (a_t-1-\log a_t)=\frac{\gamma_n'}{2}+\frac{t^2}{\gamma_n'}+t+O(n^{-1/2} \gamma_n'^{3/2}).\endaligned \end{equation}
Using the same analysis as \eqref{2.14}-\eqref{2.12}, we get 
\begin{equation}\label{Trsum1}\frac{n(a_t-1)\Gamma(n, na_t)}{\Gamma(n)}=\sqrt{\frac{n}{2\pi}}e^{-n\mu^2(a_t)}(1-\frac{1}{2n\mu^2(a_t)}+O(n^{-2}\mu^{-4}(a_t))).\end{equation}
Also, we have by Stirling's formula and the fact $a_t-1=o(1)$ that 
\begin{equation}\label{Trsum2}\aligned\Gamma^{-1}(n)(na_t)^n e^{-na_t}&=\frac{\sqrt{n}}{\sqrt{2\pi}}e^{-n(a_t-1)+n\log a_t}(1+O(n^{-1}))=\sqrt{\frac{n}{2\pi}}e^{-n\mu^2(a_t)}(1+O(n^{-1})).\endaligned\end{equation}		
Putting \eqref{Trsum1} and \eqref{Trsum2} back into \eqref{Trsum}, thinking of \eqref{amu} and applying the Taylor formula when necessary, we derive that  \begin{equation}\label{Trsum0} \aligned \operatorname{Tr}(\overline{W}_n^{(t)} )&=\frac{1+O(\gamma_n'^{-1})}{2n\mu^2(a_t)}\sqrt{\frac{n}{2\pi}}e^{-n\mu^2(a_t)}\\
&=\sqrt{\frac{n}{2\pi}}\exp(-t-\frac{\gamma_n'}{2}-\frac{t^2}{\gamma_n'}) \frac{1+O((\gamma_n' )^{-1})}{\gamma_n'(1+\frac{4t}{\gamma_n'})}\\
&=\gamma_n'^{-1}\sqrt{\frac{n}{2\pi}}\exp(-t-\frac{\gamma_n'}{2})(1-\frac{t^2+4t}{\gamma_n'}))(1+O(\gamma_n'^{-1})). 
\endaligned 
\end{equation} 
Now  
$\gamma_n^{\prime}=\log{n}-2\log{\log{n}}-\log{2\pi}=\log n-d_n$ ensures that 
$$\operatorname{Tr}(\overline{W}_n^{(t)} )=e^{-t}\frac{\log n}{\gamma_n'}(1-\frac{t^2+4t}{\gamma_n'}))(1+O(\gamma_n'^{-1}))=e^{-t}(1+\frac{d_n-(t^2+4t)}{\gamma_n'})(1+O((\log n)^{-1})).$$
		
		Now it remains to show the upper bound \eqref{norm of complex ginibre}.  We cut $A^2(t)$ into two parts $A_1^2(t)$ and $A_2^2(t),$ respectively,  according to the events 
		$$\big\{|z|+|w|\geq 2 \sqrt{a_{\epsilon_n}}\big\} \quad \text{and} \quad \big\{|z|+|w|\leq 2 \sqrt{a_{\epsilon_n}} 
					\big\}$$ with $t\le \epsilon_n\ll \log n$ to be determined later.  
					Due to the elementary property $$|\widetilde{K}_n(z, w)|^2\le \widetilde{K}_n(z, z) \widetilde{K}_n(w, w)$$ for any $z, w\in\mathbb{C},$ it holds that  \begin{equation}\label{w2norm}
			\begin{aligned}
				\|\overline{W}_n^{(t)} \|_2^2
				&=\iint_{A^2(t)}|\widetilde{K}_n(z,w)|^2\mathrm{d}^2z\mathrm{d}^2w\\
				&=\iint_{A_1^2(t)}|\widetilde{K}_n(z,w)|^2\mathrm{d}^2z\mathrm{d}^2w+\iint_{A_2^2(t)}|\widetilde{K}_n(z,w)|^2 \mathrm{d}^2z\mathrm{d}^2w\\
				&=:J_1+J_2.
			\end{aligned}
		\end{equation}
		Since $$\big\{|z|+|w|\geq 2\sqrt{a_{\epsilon_n}}\big\}\subset \big\{|z|, |w|\geq \sqrt{a_{\epsilon_n}^{}}\big\},$$ 
		we get  
		$$\aligned J_1\le \big(\int_{|z|\ge \sqrt{a_{\epsilon_n}^{}}} \tilde{K}_n(z, z)d^2 z\big)^2
		\endaligned $$
and then similarly as \eqref{Trsum0} for the ${\rm Tr}(\overline{W}_n^{(t)}),$ we get 		
		\begin{equation}\label{J1}
			J_1\lesssim \frac{1}{n\mu^4(a_{\epsilon_n}^{})} e^{-2 n\mu^2(a_{\epsilon_n})}\lesssim\frac{n}{(\log n)^2} e^{-\gamma_n'-2\epsilon_n}=e^{-2\epsilon_n}.
		\end{equation}
For the second integral at the RHS of \eqref{w2norm}, it follows from the definition that 
$$|\widetilde{K}_n(z, w)|^2=\frac{n^2}{\pi^2}\exp(-n(|z|^2+|w|^2-z\bar{w}-\bar{z}w))\frac{\Gamma(n, n z\bar{w})\Gamma(n, n\bar{z}w)}{\Gamma^2(n)}$$	
and we	
		rescale the variables $z, w$ as $$
	z=r_1 e^{i\theta_1} \qquad \text{and} \qquad 	w=r_2e^{i\theta_2} 
	$$
	with $$r_1, \; r_2\ge \sqrt{a_t} \quad \text{and} \quad r_1+r_2\le 2\sqrt{a_{\epsilon_n}}.$$	
	When $|\theta_1-\theta_2|\ge  \sqrt{\frac{\log n}{n}},$ we invoke the asymptotics \cite[Lemma 3.4]{Rider14} $$
\frac{\Gamma(n,nz\bar{w})}{\Gamma (n)}=e^{-nz\bar{w}}\frac{(ez\bar{w})^n}{\sqrt{2\pi n}(1-z\bar{w})}\bigg(1+O\bigg(\frac{1}{n|1-z\bar{w}|^2}\bigg)\bigg)
$$
with $$|1-z\bar{w}|^2=1+r_1^2r_2^2-2r_1r_2\cos(\theta_1-\theta_2).
$$ 
Thus, for such $z$ and $w$
$$|\widetilde{K}_n(z, w)|^2\lesssim\frac{n}{1+r_1^2r_2^2-2r_1r_2\cos(\theta_1-\theta_2)}\exp(-n(r_1^2+r_2^2))e^{2n}r_1^{2n}r_2^{2n}.$$	
Standard integral tells $$\iint_{[0, 2\pi]^2} \frac{1}{1+b^2-2b\cos(\theta_1-\theta_2)} d\theta_1 d\theta_2=\frac{4\pi^2}{b^2-1}\lesssim \frac{1}{b-1}$$ for $1<b.$  
Thereby, we have 
$$\aligned &\quad\iint_{A^2_2(t), |\theta_1-\theta_2|\ge \sqrt{\frac{\log n}{n} }}|\widetilde{K}_n(z,w)|^2 \mathrm{d}^2z\mathrm{d}^2w \\
&\lesssim  n e^{2n}\iint_{r_1, r_2\ge \sqrt{a_t}, \,r_1+r_2\le 2\sqrt{a_{\epsilon_n}^{}}}\frac{1}{r_1r_2-1}\exp(-n(r_1^2+r_2^2))r_1^{2n}r_2^{2n}dr_1 dr_2. 
\endaligned $$ 
The maximum of the integrant is attained at $r_1=r_2=\sqrt{a_t}$ and the area of domain of integration $\{r_1, r_2\ge \sqrt{a_t}, \,r_1+r_2\le 2\sqrt{a_{\epsilon_n}^{}}\}$ is $2(a_{\epsilon_n^{}}-a_t)^2$ and therefore 
\begin{equation}\label{J21}\aligned \iint_{A^2_2(t), |\theta_1-\theta_2|\ge \sqrt{\frac{\log n}{n} }}|\widetilde{K}_n(z,w)|^2 \mathrm{d}^2z\mathrm{d}^2w \lesssim  \frac{n}{a_t-1}  e^{-2n \mu^2(a_t)}(a_{\epsilon_n^{}}-a_t)^2. 
\endaligned \end{equation} 

While for the case $|\theta_1-\theta_2|\le \sqrt{\frac{\log n}{n}},$ we use the asymptotics formula (47) in \cite{Cipolloni22Directional} listed as follows $$
	\frac{\Gamma(n,nz\bar{w})}{\Gamma(n)}=\frac{e^{-\frac{n(z\bar{w}-1)^2}{2}}}{\sqrt{2\pi n}(z\bar{w}-1)}\bigg(1+O\bigg(|z\bar{w}-1|+n|z\bar{w}-1|^2+\frac{1}{n|z\bar{w}-1|^2}\bigg)\bigg)
	$$
	and here, 
	$$\aligned |1-z\bar{w}|^2&= 1+r_1^2r_2^2-2r_1r_2(1-\frac{(\theta_1-\theta_2)^2}{2}+O(n^{-2}(\log n)^2))\\
	&=1+r_1^2r_2^2-2r_1r_2+r_1r_2(\theta_1-\theta_2)^2+O(n^{-2}(\log n)^2)\\
	&=O( \frac{\log n}{n}).
\endaligned$$
Thereby, it follows that $|\widetilde{K}_n(z, w)|^2$ has the same order as
\begin{equation}\label{integrant}\frac{n^2}{\log n}\exp\left(-n(r_1^2+r_2^2+1)+4nr_1r_2\cos(\theta_1-\theta_2)-nr_1^2r_2^2 \cos(2(\theta_1-\theta_2))\right).
\end{equation}
Under the condition $b=r_1 r_2=1+O(\sqrt{\log n/n})$ and $|\theta_1-\theta_2|\le \sqrt{\log n/n},$ one gets that  
$$4nb\cos(\theta_1-\theta_2)-nb^2 \cos(2(\theta_1-\theta_2))=4nb- n b^2+o(1)$$
and then the term in \eqref{integrant} has the same order as 
$$\frac{n^2}{\log n}\exp\left(-n(r_1^2+r_2^2+1)+4nr_1r_2-nr_1^2r_2^2 \right). $$
This helps us to derive \begin{equation}\label{J22}
		\begin{aligned}
				&\quad\iint_{A^2_2(t), |\theta_1-\theta_2|\le \sqrt{\frac{\log n}{n} }}|\widetilde{K}_n(z,w)|^2 \mathrm{d}^2z\mathrm{d}^2w \\
&\lesssim  \frac{n^2}{\log n} \sqrt{\frac{\log n}{n}} \iint_{r_1, r_2\ge \sqrt{a_t}, \,r_1+r_2\le 2\sqrt{a_{\epsilon_n}^{}}}\exp(-n(r_1^2+r_2^2+1)+4n r_1r_2-nr_1^2 r_2^2) dr_1 dr_2\\
&\lesssim \frac{n^{3/2}}{\sqrt{\log n}} e^{-n (a_t-1)^2}(a_{\epsilon_n^{}}-a_t)^2.
		\end{aligned}
	\end{equation}
Here, the last inequality is due to the same reason as that for \eqref{J21}. Combining \eqref{J21} and \eqref{J22} together, and recalling the fact $2n \mu^2(a_t)=n(a_t-1)^2+o(1)$, we have that 
$$J_2\lesssim \frac{n^{3/2}}{\sqrt{\log n}} e^{-n (a_t-1)^2}(a_{\epsilon_n^{}}-a_t)^2=\frac{n^{3/2}}{\sqrt{\log n}} e^{-\gamma_n'-2t}\frac{(\epsilon_n-t)^2}{n\gamma_n'}=\sqrt{\frac{\log n}{n}} e^{-2t}(\epsilon_n-t)^2.$$ We choose $\epsilon_n=\sqrt{\log n}$  to make sure that 
\begin{equation}\label{J2f}
	J_2\lesssim \frac{(\log n)^{3/2}}{\sqrt{n}} e^{2 \sqrt{\log n}}.  
\end{equation}	
Reviewing  $J_1\lesssim e^{-2\sqrt{\log n}},$
 we deduce that \begin{equation}
	\|\overline{W}_n^{(t)} \|_2^2=J_1+J_2 \lesssim e^{-2\sqrt{\log n}}. 
	\end{equation} The proof is completed now. 
\end{proof}

\subsection{Real Ginibre case} In this subsection, we aim to present analogs of the complex Ginibre ensembles.  
Based on the results in \cite{Poplavskyi17, Rider14}, one knows that 
$$\max_{\sigma_i\in\mathbb{R}} \sigma_i=1+O(\frac{1}{\sqrt{n}})$$  and by contrast it follows from the limit \eqref{Xurightmostginibre} ( \cite{Cipolloni22Directional}) that   $$ \quad \max_{\sigma_i\in\mathbb{C}_+\setminus\mathbb{R}}|\sigma_i|=1+O(\frac{\sqrt{\gamma_n}}{\sqrt{n}}).$$ Thus, the largest real eigenvalue is negligible in comparison to the largest modulus of complex eigenvalues. Thus, we only need to focus on the correlation kernels of complex eigenvalues. We now recall \cite[(2.2)]{Rider14}, which is listed as follows:
$$
K_{n}^{\mathbb{C}, \mathbb{C}}(z, w) := \begin{pmatrix} S_{n}(z, w) & -\mathbf{i} S_{n}(z, \overline{w}) \\ -\mathbf{i} S_{n}(\overline{z}, w) & S_{n}(w, z) \end{pmatrix}$$
with
$$\begin{aligned}
	S_{n}(z, w) &:= \frac{\mathbf{i} n e^{-n(z - \overline{w})^2 / 2}}{\sqrt{2 \pi}} \sqrt{n} (\overline{w} - z) \sqrt{\operatorname{erfc}(\sqrt{2 n} |\Im z|) \operatorname{erfc}(\sqrt{2 n} |\Im w|)} e^{-n z \overline{w}} K_{n}(z, w)\\
	&= \Phi_{n}(z, w) \widetilde{K}_{n}(z, w)
\end{aligned}$$and
$$\Phi_{n}(z, w) :=\mathbf{i}\sqrt{\frac{n\pi}{2}} (\overline{w} - z) e^{\frac12n(|z|^2 + |w|^2 - 2 z \overline{w})} e^{-\frac12n(z - \overline{w})^2 }\sqrt{\operatorname{erfc}(\sqrt{2 n} |\Im z|) \operatorname{erfc}(\sqrt{2 n} |\Im w|)}.$$

The following results come from \cite{Cipolloni22Directional} and \cite{Rider14}. For test functions $g: \mathbb{C} \to [0, 1]$ satisfying $g(\overline{z}) = g(z)$ and $g(x) = 0$, $\forall \; x \in \mathbb{R}$, it holds that
$$\mathbb{E} \prod_{i=1}^{n} (1 - g(\sigma_i)) = \left[ \det(1 - \sqrt{g} K_{n}^{\mathbb{C}, \mathbb{C}} \sqrt{g}) \right]^{1/2}.$$ 
For the rightmost complex eigenvalue, 
we set 
	$$
	W^{\mathbb{C},\mathbb{C},(t)}_n:= \sqrt{\chi_{A(t)}^{}}{K}_n^{\mathbb{C},\mathbb{C}}\sqrt{\chi_{A(t)}^{}}
	$$
	 with $A(t)=\left\{z \in \mathbb{C} \setminus\mathbb{R}\left\lvert\, \Re z \geq 1+\sqrt{\frac{\gamma_n}{4 n}}+\frac{t}{\sqrt{4 \gamma_n n}}\right.\right\}$
	and  
	we first provide a precise asymptotic for ${\rm Tr}(W^{\mathbb{C},\mathbb{C},(t)}_n),$ which turns out to be the same as that in Lemma \ref{Trlem}.
	\begin{lem}\label{Trlemr}
		Let   $W^{\mathbb{C},\mathbb{C},(t)}_n$ be defined as above with $|t| \lesssim (\log n)^{1/4}$ and $$c_n=(25\log \log n+5\log(2\pi^4)-35)/4.$$ Then, 
		\begin{equation}\label{2.14c}
			\operatorname{Tr} (W^{\mathbb{C},\mathbb{C},(t)}_n)=e^{-t}\big(1+\frac{c_n-t^2-5t}{\log n})\left(1+O\left((\log n)^{-1}\right)\right) 
		\end{equation}
		uniformly on $|t|\lesssim (\log n)^{1/4}$  and 
		$$\|W_n^{\mathbb{C}, \mathbb{C}, (t)}\|_2\lesssim e^{-\sqrt{\log n} / 32}$$ 
		uniformly on $|t|\le \sqrt{\log n}/10.$  
	\end{lem}
	\begin{proof}  The proof for the upper bound on $\|W_n^{\mathbb{C}, \mathbb{C}, (t)}\|$ was verified in \cite{Cipolloni22Directional}, which is the same as that for complex case. We only need to prove the precise expression \eqref{2.14c}. 
		By definition, we have 
		$$
		\begin{aligned}
			\operatorname{Tr} (W^{\mathbb{C},\mathbb{C},(t)}_n) & =2\int_{A(t)_+} \Phi_n(z, z)\widetilde{K}_n(z, z) \mathrm{d}^2 z . 
		\end{aligned}
		$$
		Here, $A(t)_+=A(t)\cap \{z\in\mathbb{c}|\Im z>0\}.$
		Now (54) in \cite{Cipolloni22Directional} states 
		$$\Phi_n(z, z)=\sqrt{2n\pi} {\Im \, z} e^{2n ({\Im}\, z)^2 }{\rm erfc}(\sqrt{2n}|{\Im}\, z|),$$ which 
		combining \eqref{erfc} implies  
		$$\Phi_n(z, z)=1+O(n^{-1/6}) \quad \text{if} \quad  \Im \, z\geq n^{-\frac{5}{12}}$$ and $\Phi_n(z, z)\le 1$ when ${\Im}\, z< n^{-\frac{5}{12}}.$ 
		Thus, it follows that 
		$$
			\begin{aligned}
				&\quad \operatorname{Tr} (W^{\mathbb{C},\mathbb{C},(t)}_n) \\
				&\le 2(1+O(n^{-1/6}))\int_{A(t)_+\cap \Im z\ge n^{-\frac{5}{12}}} \widetilde{K}_n(z, z)\mathrm{d}^2 z+2\int_{A(t)_+\cap \Im z< n^{-\frac{5}{12}}} \widetilde{K}_n(z, z)\mathrm{d}^2 z
			\end{aligned}
		$$
		and by contrast
		$$\begin{aligned}
				\operatorname{Tr} (W^{\mathbb{C},\mathbb{C},(t)}_n) 
				&\ge 2(1+O(n^{-1/6}))\int_{A(t)_+\cap \Im z\ge n^{-\frac{5}{12}}} \widetilde{K}_n(z, z)\mathrm{d}^2 z.
			\end{aligned}$$
Examining the proof of Lemma \ref{Trlem}, we know 
		$$\aligned2\int_{A(t)_+} \widetilde{K}_n(z, z)\mathrm{d}^2 z&=\left(1+O\left((\log n)^{-1}\right)\right)\int_{A(t)}\widetilde{K}_n(z, z)\mathrm{d}^2 z\\
		&=e^{-t}\big(1+\frac{c_n-t^2-5t}{\log n})\left(1+O\left((\log n)^{-1}\right)\right), 
		\endaligned$$
which is the claimed asymptotic for $\operatorname{Tr} (W^{\mathbb{C},\mathbb{C},(t)}_n).$ 

Thereby, the only thing we need to verify is 
\begin{equation}\label{remainsforTrcc} 
		J_3:=\int_{A(t)_+\cap \Im z< n^{-\frac{5}{12}}} \widetilde{K}_n(z, z)\mathrm{d}^2 z\lesssim e^{-t} (\log n)^{-1}.
		\end{equation}
Indeed, 
$$J_3=\int_{t}^{+\infty} \int_0^{\gamma_n^{1/4} n^{-1/6}}\frac{\widetilde{K}_n(z, z)}{2(\gamma_n n)^{3/4}}dy dx.$$	
We set $t_0=4(|t|+(\log n)^{1/4})$ again. 
When $t\ge t_0,$ and remembering $0<y<\gamma_n^{1/4} n^{-1/6}=o(1),$ we get from \eqref{quote formula5} and elementary inequality $2ab\le a^2+b^2$ that 
$$\frac{\widetilde{K}_n(z, z)}{2(\gamma_n n)^{3/4}}\le \big(1+\sqrt{\frac{\gamma_n}{n}}+\frac{x+y^2}{\sqrt{n\gamma_n}}+\frac{(x+\gamma_n)^2}{4n\gamma_n} \big)  e^{-\frac{x+y^{2}}{3}}\lesssim \big(1+\frac{x^2}{n\gamma_n} \big)  e^{-\frac{x}{3}}.$$
Hence 
\begin{equation}\label{trcc1}\aligned \int_{t_0}^{+\infty} \int_0^{\gamma_n^{1/4} n^{-1/6}}\frac{\widetilde{K}_n(z, z)}{2(\gamma_n n)^{3/4}}dy dx&\lesssim  \gamma_n^{1/4} n^{-1/6}\int_{t_0}^{+\infty}\big(1+\frac{x^2}{n\gamma_n} \big)  e^{-\frac{x}{3}} dx\\
&\lesssim \gamma_n^{1/4} n^{-1/6}e^{-t_0/3}.
\endaligned \end{equation}		
The regime $x\in (t, t_0)$ and $0<y<\gamma_n^{1/4} n^{-1/6}=o(1)$ suits the condition of Lemma \ref{RY} and then 
$$\frac{\widetilde{K}_n(z, z)}{2(\gamma_n n)^{3 / 4}}
		\lesssim e^{-x}, $$ which implies 
\begin{equation}\label{trcc2}\aligned &\quad  \int_t^{t_0}\int_0^{\gamma_n^{1/4} n^{-1/6}}\frac{\widetilde{K}_n(z, z)}{2(\gamma_n n)^{3/4}}dy dx&\le e^{-t} \gamma_n^{1/4} n^{-1/6}.
\endaligned \end{equation}
	
Combining the estimates \eqref{trcc1} and \eqref{trcc2}	together, we accomplish the verification of \eqref{remainsforTrcc}. The proof is then completed.	
			\end{proof}

Finally, we present a similar result concerning the largest modulus of complex eigenvalues in real Ginibre ensembles.
We define
 
	$$
	\overline{W}^{\mathbb{C},\mathbb{C},(t)}_n= \sqrt{\chi_{A(t)}^{}}{K}_n^{\mathbb{C},\mathbb{C}}(z,z)\sqrt{\chi_{A(t)}^{}},
	$$
where correspondingly  $$A(t):=\{z\in\mathbb{C}\setminus\mathbb{R}||z|\ge 1+\sqrt{\frac{\gamma_n'}{4n}}+\frac{t}{\sqrt{4n\gamma_n}}\}.$$ 

Following the proofs of Lemma \ref{Trlemr} and Lemma \ref{lemma2.4},  we obtain the following precise estimate for  	$\operatorname{Tr} (\overline{W}^{\mathbb{C},\mathbb{C},(t)}_n).$
	\begin{lem}\label{lemma2.6}
		Let  $\overline{W}^{\mathbb{C},\mathbb{C},(t)}_n$ be defined as above and $d_n$ be given in Lemma \ref{lemma2.4}. Then, 
		\begin{equation}\label{2.21}
			\operatorname{Tr} (\overline{W}^{\mathbb{C},\mathbb{C},(t)}_n)=				e^{-t}\bigg(1+\frac{d_n-(t^2+4t)}{\gamma_n'}\bigg)\left(1+O\left((\log n)^{-1}\right)\right)\end{equation} 
			uniformly on $|t|\lesssim (\log n)^{1/4}$ and  
			$$\|\overline{W}_n^{\mathbb{C}, \mathbb{C}, (t)}\|_2\le e^{-\sqrt{\log n}}$$
			uniformly on $|t|\le \sqrt{\log n}.$
		\end{lem}

	\section{Proofs of Theorems \ref{TH1} and \ref{TH2}}
	This section is devoted to the proofs of Theorem \ref{TH1} and \ref{TH2}.  We first give an important  proposition serving for both Berry-Esseen bound and $W_1$ Wasserstein distance.
	
\begin{prop}\label{Keyprop} Let ${\rm Tr}(W_n^{(t)}), {\rm Tr}(\overline{W}_n^{(t)})$ and ${\rm Tr}(W_n^{\mathbb{C}, \mathbb{C}, (t)}), {\rm Tr}(\overline{W}_n^{\mathbb{C}, \mathbb{C}, (t)})$ be defined as in the previous section. We have 
\begin{align} 
|e^{-{\rm Tr}(W_n^{(t)})}-e^{-e^{-t}}|&=e^{-e^{-t}-t}\frac{25\log\log n}{4\log n}(1+o(1)); \label{comright}\\
|e^{-{\rm Tr}(\overline{W}_n^{(t)})}-e^{-e^{-t}}|&=e^{-e^{-t}-t}\frac{2\log\log n}{\log n}(1+o(1)); \label{comsr}\\
|e^{-\frac12({\rm Tr}(W_n^{\mathbb{C}, \mathbb{C}, (t)}))}-e^{-\frac12e^{-t}}|&=e^{-\frac{1}{2}e^{-t}-t}\frac{25\log\log n}{8\log n}(1+o(1)); \label{realright}\\
|e^{-\frac12({\rm Tr}(\overline{W}_n^{\mathbb{C}, \mathbb{C}, (t)}))}-e^{-\frac12e^{-t}}|&=e^{-\frac12e^{-t}-t}\frac{\log\log n}{\log n}(1+o(1)) \label{realsr}
\end{align} 	
uniformly for $t\in[-\ell_1(n), \ell_2(n)]$ with $\ell_2(n)=(\log n)^{1/4}$ and $\ell_1(n)=\frac14\log \log n.$ 
\end{prop}

\begin{proof} We only prove \eqref{comright} and \eqref{realsr}. The others follow by same arguments. 
It is easy to see that  
\begin{equation}\label{trdiff} |e^{-{\rm Tr}(W_n^{(t)})}-e^{-e^{-t}}|=e^{-e^{-t}}|\exp(e^{-t}-{\rm Tr}(W_n^{(t)}))-1|.
\end{equation} 
Lemma \ref{Trlem} entails that 
$${\rm Tr}(W_n^{(t)})-e^{-t}=e^{-t}\frac{c_n-t^2-5t}{\log n}\left(1+O\left((\log n)^{-1}\right)\right)+e^{-t} O\left((\log n)^{-1}\right).$$	
The constraint $-\frac14\log\log n\le t\le (\log n)^{1/4}$ makes sure 
$${\rm Tr}(W_n^{(t)})-e^{-t}=O((\log n)^{3/4}/\log n)=o(1),$$ which is exactly the reason why we choose $\ell_1(n)=\frac14\log \log n$ and $\ell_2(n)=(\log n)^{1/4}.$ 
Hence, applying the Taylor's formula to the exponent of RHS of \eqref{trdiff} and using the definition of $c_n$, we derive 
$$\aligned|e^{-{\rm Tr}(W_n^{(t)})}-e^{-e^{-t}}|&=\frac{1+O((\log n)^{-1})}{\log n}e^{-e^{-t}-t}|c_n-t^2-5t+O(1)| \\
&=\frac{c_n}{\log n} e^{-e^{-t}-t}(1+o(1))\\
&=\frac{25\log\log n}{4\log n}e^{-e^{-t}-t}(1+o(1)).\endaligned $$
Next, we prove \eqref{realsr}. Similarly, Lemma \ref{lemma2.6} ensures that 
$$\aligned|e^{-\frac12({\rm Tr}(\overline{W}_n^{\mathbb{C}, \mathbb{C}, (t)}))}-e^{-\frac12e^{-t}}|&=e^{-\frac{1}{2}e^{-t}}|\exp(\frac{1}{2}e^{-t}-\frac{1}{2}{\rm Tr}(\overline{W}_n^{\mathbb{C}, \mathbb{C}, (t)}))-1|\\
&=\frac12e^{-\frac{1}{2}e^{-t}}|e^{-t}-{\rm Tr}(\overline{W}_n^{\mathbb{C}, \mathbb{C}, (t)})|(1+o(1))\\
&=\frac{\log\log n}{\log n}e^{-\frac{1}{2}e^{-t}-t} (1+o(1)).
\endaligned$$
\end{proof}

With this key proposition at hand, now we are going to work on the Berry-Esseen bound.  
			
	\subsection{The verification of Berry-Essen bound}  
	
	We first treat the rightmost eigenvalue of complex Ginibre and 
	we start with a similar decomposition as in \cite{MaMeng25}, which is 
	$$\aligned &\quad \quad\sup_{t \in \mathbb{R}}\left|\mathbb{P}(Z_{n}  \leq t)-e^{-e^{-t}} \right|\\
	&=
	\sup_{t \in \left[-\ell_1(n), \ell_2(n)\right]}\left|\mathbb{P}(Z_{n}   \leq t)-e^{-e^{-t}} \right|  +  \sup_{x\leq -\ell_1(n)}\left|\mathbb{P}(Z_{n}   \leq t)-e^{-e^{-t}} \right|
	+\sup_{x\geq \ell_2(n)}\left|\mathbb{P}(Z_{n}   \leq t)-e^{-e^{-t}} \right|. \endaligned 
	$$
  	The expression \eqref{comright}  guarantees that 
  	$$\aligned\sup_{-\ell_1(n)\le t\le \ell_2(n)}|e^{-{\rm Tr}(W_n^{(t)})}-e^{-e^{-t}}|&=\frac{25\log\log n(1+o(1))}{4\log n}\sup_{-\ell_1(n)\le t\le \ell_2(n)} e^{-t-e^{-t}}\\
  	&=\frac{25\log\log n(1+o(1))}{4e \log n}.\endaligned $$
  This, together with \eqref{Keydecom}, implies 
	\begin{equation}\label{commiddle}\sup_{t \in \left[-\ell_1(n), \ell_2(n)\right]}\left|\mathbb{P}(Z_{n}   \leq t)-e^{-e^{-t}} \right|=\frac{25\log\log n(1+o(1))}{4e \log n}. \end{equation} 
Now    
	$$
		\begin{aligned}
			\sup_{t\leq -\ell_1(n)}\left|\mathbb{P}(Z_{n}    \leq t)-e^{-e^{-t}} \right|&\leq \mathbb{P}(Z_{n}    \leq -\ell_1(n))+e^{-e^{\ell_1(n)}}\\
			\end{aligned}
	$$ 
	and then using \eqref{Keydecom} and Lemma \ref{Trlem}, 
	we have 
	\begin{equation}\label{estleft}
		\begin{aligned}		\sup_{t\leq -\ell_1(n)}\left|\mathbb{P}(Z_{n}    \leq t)-e^{-e^{-t}} \right|&\le \exp(-{\rm Tr}(W_n^{(-\ell_1(n))}))+2e^{-e^{\ell_1(n)}}+O(e^{-\sqrt{\log n}/(32)})\\
		&\lesssim e^{-e^{\ell_1(n)}}=e^{-(\log n)^{1/4}}.
		\end{aligned}
	\end{equation} 

	Similarly, we have 
	\begin{equation}\label{estright}\aligned
	\sup_{t\geq \ell_2(n)}\left|\mathbb{P}(Z_{n}   \leq t)-e^{-e^{-t}} \right|&\le \exp(-{\rm Tr}(W_n^{(\ell_2(n))}))+2(1-e^{-e^{-\ell_2(n)}})+O(e^{-\sqrt{\log n}/(32)})\\
	&\lesssim e^{-\ell_2(n)}=e^{-(\log n)^{1/4}}. \endaligned 
	\end{equation}	
	
	Combining (\ref{commiddle}), (\ref{estleft}) and (\ref{estright}) together, we derive the desired Berry-Esseen bound as 
	$$
	\sup_{t\in \mathbb{R}}\left|\mathbb{P}^{\rm Gin}(Z_{n}   \leq t)-e^{-e^{-t}} \right|= \frac{25\log \log n}{4e\log n}(1+o(1)).
	$$ 
	
	For the spectral radius of complex Ginibre ensembles, we only need to work on the middle part $\sup_{[-\ell_1(n), \ell_2(n)]}|\mathbb{P}(Y_n\le t)-e^{-e^{-t}}|,$ which contributes the precise asymptotic of the Berry-Esseen bound. 
	
	Indeed, \eqref{comsr} entails that 
	$$\aligned\sup_{t\in [-\ell_1(n), \ell_2(n)] }|\exp(-{\rm Tr}(\overline{W}_n^{(t)}))-e^{-e^{-t}}|&=\frac{2\log\log n(1+o(1))}{\log n}\sup_{t\in [-\ell_1(n), \ell_2(n)] } e^{-t-e^{-t}}\\
	&=\frac{2\log\log n}{ e\log n}(1+o(1)). 
	\endaligned $$
The upper bound  \eqref{norm of complex ginibre} in Lemma \ref{lemma2.4} brings similarly as \eqref{Keydecom} the following asymptotic 
$$|\mathbb{P}(Y_n\le t)-e^{-e^{-t}}|=|\exp(-{\rm Tr}(\overline{W}_n^{(t)}))-e^{-t}|+O(e^{-\sqrt{\log n}}).$$
This closes the proof of Berry-Esseen bound for $Y_n.$  	
The results for real Ginibre ensemble follow by the same argument and their proofs are omitted here. 	

	\subsection{On the $W_1$ distance}	
	We still verify the $W_1$ distance for rightmost eigenvalue of the complex Ginibre.  We decompose the integral into two parts as ,
	$$
	\begin{aligned}
		&\quad W_{1}\left(\mathcal{L}(F_n),\Lambda\right)\\&=\left(\int_{\ell_2(n)}^{\infty}+\int_{-\infty}^{-\ell_1(n)}\right)\left|\mathbb{P}(Z_{n}   \leq t)-e^{-e^{-t}}\right|\mathrm{~d} t + \int_{-\ell_1(n)}^{\ell_2(n)}\left|\mathbb{P}(Z_{n}   \leq t)-e^{-e^{-t}}\right|\mathrm{~d} t\\
		&=:I_1+I_2.
	\end{aligned}
	$$
	It suffices to prove 
	$$I_2=\frac{25\log\log n}{4\log n}(1+o(1)) \quad \text{and} \quad I_1\ll \frac{\log \log n}{\log n}.$$
	First, as for the Berry-Esseen bound,  we derive from \eqref{Keydecom} and \eqref{comright} that 
	\begin{equation*}\aligned 
		I_2
		&=\int_{-\ell_1(n)}^{\ell_2(n)} |e^{-{\rm Tr}(W_n^{(t)})}-e^{-e^{-t}}|\mathrm{~d} t+O(\ell_2(n)e^{-\frac1{32}\sqrt{\log n}})\\
		&=\frac{25\log\log n(1+o(1))}{4\log n} \int_{-\ell_1(n)}^{\ell_2(n)}  e^{-e^{-t}-t} dt+o(\frac{\log\log n}{\log n})\\
		&=\frac{25\log\log n}{4\log n}(1+o(1)),\endaligned
	\end{equation*}
	where the second equality is due to the  integral $$\int_{-\infty}^{+\infty} e^{-e^{-t}-t}  dt=1.$$ 	
	
	Now we investigate the behavior of $I_1.$ The Markov inequality helps us to get the following rough estimate
	$$
	\begin{aligned}
		\int_{|t|\ge \ell_2(n)}\left|\mathbb{P}(Z_{n}   \leq t)-e^{-e^{-t}}\right|\mathrm{~d} t &\leq 2\int^{\infty}_{\ell_2(n)}\mathbb{P}(|Z_{n}  | \geq t)+\mathbb{P}(|\Lambda|\geq t)\mathrm{~d} t \\
		&\lesssim \ell_2^{-4}(n)(\mathbb{E}|Z_{n}  |^5+\mathbb{E}|\Lambda|^5)\\
		&	\ll \frac{\log \log n}{\log n}.\\
	\end{aligned}
	$$
	It remains to prove 
	$$\int_{-\ell_2(n)}^{-\ell_1(n)}\left|\mathbb{P}(Z_{n}   \leq t)-e^{-e^{-t}}\right|\mathrm{~d} t\ll \frac{\log \log n}{\log n}.$$
Indeed, 
	\begin{equation}\aligned \label{E12}
		\int_{-\ell_2(n)}^{-\ell_1(n)} |e^{-{\rm Tr}(W_n^{(t)})}-e^{-e^{-t}}|dt\le \int_{-\ell_2(n)}^{-\ell_1(n)} e^{-e^{-t}} dt+\int_{-\ell_2(n)}^{-\ell_1(n)} e^{-{\rm Tr}(W_n^{(t)})} dt.
		\endaligned \end{equation}
	Lemma \ref{Trlem} and the fact $t^2+5t>0$ when $t\in (-\ell_2(n), -\ell_1(n))$ give us 
	$${\rm Tr}(W_n^{(t)})\ge e^{-t}(1+\frac{c_n}{\log n})(1+O((\log n)^{-1}))$$ and 
	the fact $c_n\gg 1$ helps us to get in further that 
	$$(1+\frac{c_n}{\log n})(1+O((\log n)^{-1}))=1+\frac{c_n}{\log n}+O((\log n)^{-1})\ge 1.$$ 
	Hence, 
	$${\rm Tr}(W_n^{(t)})\ge e^{-t}$$ and then, with \eqref{E12} and simple calculus, it holds 
	$$\int_{-\ell_2(n)}^{-\ell_1(n)} |e^{-{\rm Tr}(W_n^{(t)})}-e^{-e^{-t}}|dt\lesssim \int_{-\ell_2(n)}^{-\ell_1(n)} e^{-e^{-t}} dt\lesssim \ell_2(n) e^{-e^{\ell_1(n)}}\ll\frac{\log \log n}{\log n} $$ 
	since $\ell_1(n)=\frac14\log\log n$ and $\ell_2(n)=(\log n)^{1/4}.$
	
As far as the $W_1$ distance for the spectral radius of the complex ensemble, 	
	we similarly decompose the integral into two parts as follows
	$$
	\begin{aligned}
		&\quad W_{1}\left(\mathcal{L}(Y_n  ),\Lambda\right)\\&=\left(\int_{\ell_2(n)}^{\infty}+\int_{-\infty}^{-\ell_1(n)}\right)\left|\mathbb{P}(Y_{n}   \leq t)-e^{-e^{-t}}\right|\mathrm{~d} t + \int_{-\ell_1(n)}^{\ell_2(n)}\left|\mathbb{P}(Y_{n}  \leq t)-e^{-e^{-t}}\right|\mathrm{~d} t\\
		&=: I_1+I_2.
	\end{aligned}
	$$
		Repeating the proof for rightmost eigenvalue while using Lemma \ref{lemma2.4} instead we still have 
	$$I_2=\frac{2\log\log n}{\log n}(1+o(1)) \quad \text{and} \quad I_1\ll \frac{\log \log n}{\log n}$$
	which implies $$
	W_{1}\left(\mathcal{L}(Y_n  ),\Lambda\right)=\frac{2\log\log n}{\log n}(1+o(1)).
	$$
	The verification for the real Ginibre case follows from exactly the same argument, using Lemmas \ref{Trlemr} and \ref{lemma2.6}.
	\section{Proof of Theorem \ref{TH1} for IID case}
	In this section, we briefly demonstrate that the convergence rate of the rightmost eigenvalues established for the complex Ginibre ensemble also holds universally for complex i.i.d. matrices whose entries satisfy Assumption \ref{assumption}.
	
\begin{thm}
\label{TH3}
		Let $X$ be a complex    matrix with i.i.d. entries satisfying {\bf Assumption} \ref{assumption} and let $(\sigma_i)_{1\le i\le n}$ be its eigenvalues. Set $$Z_n=\sqrt{4 n \gamma_n}\left[\max_i\Re\sigma_i-1-\sqrt{\frac{\gamma_n}{4 n}}\right]. $$ 	
		Then, we have
		$$
		\sup_{x\in \mathbb{R}}|\mathbb{P}(Z_n\le x)-e^{-e^{-x}}|=\frac{25\log \log n}{4e\log n}(1+o(1))
		$$
		and	$$
		W_{1}\left(\mathcal{L}(Z_n),\Lambda\right)=\frac{25\log \log n }{4\log n}(1+o(1))
		$$
		for  sufficiently large $n$.	
\end{thm}
	
\begin{proof}
	We outline the sketch of proof  building upon both Theorem \ref{TH1} and the Green function comparison theorem introduced in \cite{Cipolloni23Universality}. 
	
	Let $F_n$ and $F_n^{\rm Gin}$ be distribution functions of $Z_n$ and $Z_n^{\rm Gin},$ respectively. Here the superscript {\rm Gin} represents the complex Ginibre case. 
	First of all, the triangle inequality ensures 
	$$ \aligned 
	|F_n(x)-e^{-e^{-x}}|&\le |F_n(x)-F_n^{\rm Gin}(x)|+|F_n^{\rm Gin}(x)-e^{-e^{-x}}|;	\\
	|F_n(x)-e^{-e^{-x}}|&\ge |F_n^{\rm Gin}(x)-e^{-e^{-x}}|-|F_n(x)-F_n^{\rm Gin}(x)|.
	\endaligned$$
	By the Berry-Esseen bound obtained for the complex Ginibre case, it suffices to prove 
	$$\sup_{x\in\mathbb{R}}|F_n(x)-F_n^{\rm Gin}(x)|\ll \frac{\log\log n}{\log n}. $$ 
	Similarly, we start with the decomposition  
	\begin{equation}\label{splitting}
		\sup_{x \in \left[-\ell_n, \ell_n\right]}\left|F_n(x)-F_n^{\rm Gin}(x)\right|  +  \sup_{|x| \geq \ell_n}\left|F_n(x)-F_n^{\rm Gin}(x)\right|=:I_1+I_2
	\end{equation}
	with $\ell_n=(\log n)^{1/4}.$

	Scrutinizing the proof of \cite[Theorem 4.1]{Cipolloni23Universality}, we have
	\begin{equation}\label{Berry-Esseen bound-middle}
		\sup_{x\in\left[-\ell_n,\ell_n\right]}\left|F_n(x)-F_n^{\mathrm{Gin}}(x)\right|\lesssim\frac{1}{n^{\epsilon}}
	\end{equation}
	for some $\epsilon>0,$ which constitutes the crux for translating the convergence rate from the i.i.d. scenario to the Ginibre ensemble. 
	
	Then, it  follows from  \eqref{Berry-Esseen bound-middle} that 
	\begin{equation}\label{Berry-Esseen bound-left}
		\begin{aligned}
			\sup_{x\le -\ell_n}\left|F_n(x)-F_n^{\mathrm{Gin}}(x)\right|&\le F_n(-\ell_n)+F_n^{\rm Gin}(-\ell_n) \\
			&\le \sup_{x\in\left[-\ell_n,\ell_n\right]}\left|F_n(x)-F_n^{\mathrm{Gin}}(x)\right|+2F_n^{\rm Gin}(-\ell_n)\\
			&\ll \frac{\log \log n }{\log n}.
		\end{aligned}
	\end{equation}
	Similarly, we have the following estimate 
	\begin{equation}\label{Berry-Esseen bound-right}
		\sup_{x\geq \ell_n}|F_n (x)-F_n^{\mathrm{Gin}}(x)|\ll \frac{\log\log n}{\log n}.
	\end{equation}
	Combining (\ref{Berry-Esseen bound-middle}), (\ref{Berry-Esseen bound-left}) and (\ref{Berry-Esseen bound-right}) together, we obtain $$
	\sup_{x\in \mathbb{R}} |F_n(x)-e^{-e^{-x}}|= \frac{25\log \log n}{4e\log n}(1+o(1)),
	$$
	which completes the proof of the Berry-Esseen bound in Theorem \ref{TH3}.
	
	Now we prove the result for $W_1$ Wasserstein distance. As for the Berry-Esseen bound, it suffices to show$$
	W_1(\mathcal{L}(Z_n),\mathcal{L}(Z_n^{\mathrm{Gin}}))\ll \frac{\log \log n}{\log n}.$$
	
	We start with  taking $\kappa_n=\left(\log{n}\right)^{1/6}$ which satisfies $\kappa_n \ll \ell_n $ and   decomposing the integral into two parts
	\begin{equation}\label{decomtwo}
		\begin{aligned}
			W_1(\mathcal{L}(Z_n),\mathcal{L}(Z_n^{\mathrm{Gin}}))&=\int_\mathbb{R}|F_n(x)-F_n^{\mathrm{Gin}}(x)| dx\\
			&=\left(\int_{|x|\leq \kappa_n}+\int_{|x|\geq \kappa_n} \right)\left|F_n(x)-F_n^{\mathrm{Gin}}(x) \right| dx.
		\end{aligned}
	\end{equation}

	Then the triangle and Markov inequalities help us to obtain
	\begin{equation}\label{second}
		\begin{aligned}
			\int_{|x|\geq \kappa_n}\left|F_n(x)-F_n^{\mathrm{Gin}}(x) \right| dx &\leq 2\int_{\kappa_n}^{+ \infty}\mathbb{P}(|Z_n|\geq x)+\mathbb{P}^{\mathrm{Gin}}(|Z_n | \geq x) dx \\
			&\leq  2\int_{\kappa_n}^{+ \infty}\frac{\mathbb{E}|Z_n|^{9}+\mathbb{E}^{\mathrm{Gin}}|Z_n |^{9}}{x^{9}} dx \\
			&\lesssim \frac{\mathbb{E}|Z_n|^{9}+\mathbb{E}^{\mathrm{Gin}}|Z_n |^{9}}{\kappa_n^{8}}\\
			&\ll\frac{\log{\log{n}}}{\log{n}},
		\end{aligned}
	\end{equation}
	where, the boundedness of the moments hold similarly  due to \cite[Remark 2.4]{Cipolloni23Universality}. 
	
	On the other hand, it follows again from \eqref{Berry-Esseen bound-middle} that  
	\begin{equation*}\aligned
		\int_{-\kappa_n}^{\kappa_n}|F_n(x)-F_n^{\mathrm{Gin}}(x)| dx \lesssim\frac{\kappa_n}{n^{\varepsilon}}\ll \frac{\log\log n}{\log n},
		\endaligned 
	\end{equation*} which with \eqref{second} confirms 
	\eqref{decomtwo}. 
	\end{proof} 
	
	\begin{rmk}
		Similarly, analogous results can be derived for the spectral radius of complex i.i.d. random matrices whose entries satisfy Assumption \ref{assumption}, which are  $$\sup_{t \in \mathbb{R}}\left|\mathbb{P}(Y_n \leq t)-e^{-e^{-t}} \right|=\frac{2\log \log n}{e\log n}(1+o(1)).
	$$
	and $$
	W_1(\mathcal{L}(Y_n), \Lambda)=\frac{2\log \log n}{\log n}(1+o(1)).
	$$
	\end{rmk}

\section{The scaling constants affect the convergence rates } 
In this last section, we briefly state the influence of scaling constants $\gamma_n$ and $\gamma_n'$ on the convergence rate and we summarize them as a theorem.  
\begin{thm} \label{TH4}
		Let $X$ be a  real or complex Ginibre ensemble  and let $\sigma_1,\cdots ,\sigma_n$  be the same as above.	Let 
		$\widetilde{\gamma}_n$ and $\widetilde{\gamma}_n'$ be the unique solution to the following equations, respectively 
		$$\aligned  64 \widetilde{\gamma}_n^5 \pi^4 \exp(2\widetilde{\gamma}_n)=n \quad \text{and} \quad 
				2\pi (\widetilde{\gamma}_n')^2\exp(\widetilde{\gamma}_n')=n.
		\endaligned $$ 
		Set 
		$$\aligned Z_n&=\sqrt{4 n \widetilde{\gamma}_n}\left[\max_i\Re\sigma_i-1-\sqrt{\frac{\widetilde{\gamma}_n}{4 n}}\right]; \\
		Y_n&=\sqrt{4 n \widetilde{\gamma}_n^{\prime}}\left[\max_i |\sigma_i|-1-\sqrt{\frac{\widetilde{\gamma}_n^{\prime}}{4 n}}\right]. \endaligned $$
		Then, for  sufficiently large $n$, we have		$$\aligned
		\sup_{x\in \mathbb{R}}|\mathbb{P}(Z_n\le t)-e^{-\frac{\beta}{2}e^{-t}}|&=\frac{\kappa_1}{4\log n}(1+o(1)); \\
		\sup_{t\in \mathbb{R}}|\mathbb{P}(Y_n\le t)-e^{-\frac{\beta}{2}e^{-t}}|&=\frac{\kappa_2}{\log n}(1+o(1)),
		\endaligned $$  
	where
		$$\aligned \kappa_1&=\sup_{t\in\mathbb{R}} \exp(-e^{-t}-t)(4t^2+20t+35)\approx 15.4; \\
		\kappa_2&=\sup_{t\in\mathbb{R}} \exp(-e^{-t}-t)(t^2+4t)\approx 1.48.
		\endaligned $$
	\end{thm}
\begin{proof}
The similar arguments still work and we only need to figure out the precise asymptotical expressions of ${\rm Tr}(W_n^{(t)})$ and ${\rm Tr}(\overline{W}_n^{(t)}).$ In fact, the expression \eqref{equa} is replaced by 
$$
\exp(-\frac{1}{2}\widetilde{\gamma}_n)=n^{-1/4}\widetilde{\gamma}_n^{5/4} \pi 2^{3/2},	
$$ which makes 
 \begin{equation}\label{equamodi}  \frac{\widetilde{K}_n(z, z)}{2(\widetilde{\gamma}_n n)^{3 / 4}}=\frac{ e^{-x-y^2}}{\sqrt{\pi}}\big(1-\widetilde{\gamma}_n^{-1}(1+x+y^2+\frac12(x+y^2)^2)\big)\left(1+O((\log n)^{-1})\right) \end{equation}
and then the expression \eqref{total} is modified to be  
$$\aligned 
		I_1=e^{-t}\big(1-\widetilde{\gamma}_n^{-1} (\frac{t^2}{2}+\frac{5t}{2}+\frac{35}8)\big)(1+O((\log n)^{-1})).\\
\endaligned $$
No change is observed in $I_2$ and $I_3$ and then 
$${\rm Tr}(W_n^{(t)})=e^{-t}\big(1-\widetilde{\gamma}_n^{-1} (\frac{t^2}{2}+\frac{5t}{2}+\frac{35}8)\big)(1+O((\log n)^{-1}))$$ uniformly on $|t|\lesssim (\log n)^{1/4},$
whence 
$$|e^{-{\rm Tr}(W_n^{(t)})}-e^{-e^{-t}}|=\widetilde{\gamma}_n^{-1}\exp(-e^{-t}-t)(\frac{t^2}{2}+\frac{5t}{2}+\frac{35}8)(1+o(1)).$$
Therefore, 
$$\sup_{t\in [-\ell_1(n), \ell_2(n)]}|e^{-{\rm Tr}(W_n^{(t)})}-e^{-e^{-t}}|=\widetilde{\gamma}_n^{-1} \sup_{t\in [-\ell_1(n), \ell_2(n)]}\exp(-e^{-t}-t)(\frac{t^2}{2}+\frac{5t}{2}+\frac{35}8).$$
The defining equation for $\widetilde{\gamma}_n$ ensures that $\widetilde{\gamma}_n=\frac{1}{2}\log n(1+o(1))$ and then 
$$\aligned \sup_{t\in \mathbb{R}}|\mathbb{P}(Z_n\le t)-e^{-e^{-t}}|&=\frac{1+o(1)}{4\log n}\sup_{t\in [-\ell_1(n), \ell_2(n)]}\exp(-e^{-t}-t)(4t^2+20t+35)\\
&= \frac{\kappa_1}{4\log n}(1+o(1)).\endaligned $$
Similarly, when $\widetilde{\gamma}_n'$ satisfies $2\pi (\widetilde{\gamma}_n')^2\exp(\widetilde{\gamma}_n')=n,$ the equation \eqref{Trsum0} is modified to be  
$$\aligned \operatorname{Tr}(\overline{W}_n^{(t)} )
&=\widetilde{\gamma}_n'^{-1}\sqrt{\frac{n}{2\pi}}\exp(-t-\frac{\widetilde{\gamma}_n'}{2})(1-\frac{t^2+4t}{\widetilde{\gamma}_n'}))(1+O(\widetilde{\gamma}_n'^{-1}))\\
&=e^{-t}(1-\frac{t^2+4t}{\widetilde{\gamma}_n'})(1+O(\widetilde{\gamma}_n'^{-1}))
\endaligned 
$$
and then 
$$|e^{-{\rm Tr}(\overline{W}_n^{(t)})}-e^{-e^{-t}}|=\frac{1}{\widetilde{\gamma}_n'}\exp(-e^{-t}-t)(t^2+4t)(1+o(1)).$$
The proof is then completed by taking the supremum and  the fact $\widetilde{\gamma}_n'=\log n(1+o(1)).$   
	\end{proof}


\begin{thebibliography}{SOSL90} 
	\bibitem{AB10} G, Akemann and M,  Bender(2010).  Interpolation between Airy and Poisson statistics for unitary chiral non-Hermitian random matrix ensembles. J. Math. Phys. {\bf 51}(10): 103524.
		\bibitem{Akemannand2012}	G. Akemann and  M. J. Phillips. Universality Conjecture for all Airy, Sine and Bessel Kernels in the Complex Plane. 	arXiv:1204.2740.
		
		\bibitem{AP14} G. Akemann and  M.J. Phillips. The Interpolating Airy Kernels for the $\beta=1$ and $\beta=4$ Elliptic Ginibre Ensembles. \emph{J. Stat. Phys.}, \textbf{155}(2014), 421-465. 
		
		\bibitem{Alt_2021}
		J. Alt, L. Erd\"{o}s and  T. Kr\"{u}ger. Spectral radius of random matrices with independent entries. \emph{Probab. Math. Phys.,} \textbf{2}(2)(2021), 221-280.
		
		\bibitem{Bai97} Z. D. Bai. Circular law. \emph{Ann. Probab.,} \textbf{25}(1)(1997), 494-529.
		
		\bibitem{Bai93i} Z. D. Bai. Convergence Rate of Expected Spectral Distributions of Large Random Matrices. Part I. Wigner Matrices. \emph{Ann. Probab.,} \textbf{21}(2)(1993), 625-648.
		
		\bibitem{Bai93ii} Z. D. Bai. Convergence Rate of Expected Spectral Distributions of Large Random Matrices. Part II. Sample Covariance Matrices. \emph{Ann. Probab.,} \textbf{21}(2)(1993), 649-672.
		
		\bibitem{Bai24} Z. D. Bai and J. Hu. A revisit of the circular law. arXiv:2408.13490.
		
		\bibitem{Bai88} Z. D. Bai and Y. Q. Yin. Convergence to the semicircle law. \emph{Ann. Probab.,} \textbf{16}(1988), 863-875.
		\bibitem{Bender10}
		M. Bender.  Edge scaling limits for a family of non-Hermitian random matrix ensembles. \emph{Probab. Th. Relat. Fields}, \textbf{147}(2010), 241-271.
		\bibitem{BChafai12} C. Bordenave and D. Chafa$\ddot{\rm \imath}$. Around the circular law. \emph{Probab. Surv.,} \textbf{9}(2012), 1-89. 
		
		\bibitem{Borodin09}
		A. Borodin and C. D. Sinclair. The Ginibre ensemble of real random matrices and its  scaling limits. \emph{Commun. Math. Phys.,} \textbf{291}(2009), 177-224.
		\bibitem{Borodin16} A. Borodin, M. Poplavskyi, C. D. Sinclair, R. Tribe, and O. Zaboronski, Erratum to: The Ginibre ensemble
		of real random matrices and its scaling limits [MR2530159], \emph{Comm. Math. Phys.,} \textbf{346}(2016), 1051-1055.
		\bibitem{ByunForrester}S. S. Byun and P. J. Forrester. \emph{Progress on the study of the Ginibre ensembles.} 1st ed., Springer Singapore, 2024.
		\bibitem{Cipolloni22Directional}\text{G. Cipolloni, L. Erd\"{o}s, D. Schr\"{o}der and Y. Xu.} Directional extremal statistics for Ginibre eigenvalues. \emph{J. Math. Phys.,} \textbf{63}(10)(2022), 103303.
		
		\bibitem{Cipolloni22rightmost}
		G. Cipolloni, L. Erd\"{o}s, D. Schr\"{o}der and Y. Xu. On the rightmost eigenvalue of non-Hermitian random matrices. \emph{Ann. Probab.,} \textbf{51}(6)(2022), 2192-2242.
		
		
		\bibitem{Cipolloni23Universality}
		G. Cipolloni, L. Erd\"{o}s and Y.  Xu. Universality of extremal eigenvalues of large random matrices. arXiv:2312.08325. 
		\bibitem{ChafaiP} D. Chafai and S. P\'ech\'e. A note on the second order universality at the edge of Coulomb gases on the plane. \emph{J. Stat.
			Phys.}, \textbf{156}(2014), 368-383.
		\bibitem{Edelman97}
		A. Edelman. The Probability that a random real Gaussian matrix has $k$ real eigenvalues, related distributions, and the circular law. \emph{J. Multivar. Anal.,} \textbf{60}(2)(1997), 203-232.
		\bibitem{ForresterNagao} P. Forrester and T. Nagao. Eigenvalue statistics of the real Ginibre ensemble. \emph{Phys. Rev. Lett.,} \textbf{99}(2007), 050603.
		\bibitem{Garrod15} B. Garrod, M. Poplavskyi, R. Tribe and O. Zaboronski. Examples of interacting particle systems on $\mathbb{Z}$ as Pfaffian point processes: annihilating and coalescing random walks. \emph{Ann. Henri. Poincar$\acute{e}$,} \textbf{19}(2018), 3635–3662. 
		\bibitem{Ginibre65} J. Ginibre. Statistical Ensembles of Complex, Quaternion, and Real Matrices. \emph{J. Math. Phys.,} \textbf{6}(1965), 440-449.
		
		\bibitem{Girko84} V. L. Girko. The circular law. \emph{Teor. Veroyatnost. i Primenen.,} \textbf{29}(1984), 669-679.
		
		\bibitem{Gohberg} I. Gohberg, S. Goldberg, and N. Krupnik. \emph{Traces and determinants of linear operators. Operator Theory: Advances and Applications.} {\bf 116} Birkhauser, Basel, 2000.
		\bibitem{Kostlan1992} E., Kostlan. On the spectra of Gaussian matrices. \emph{Linear Algebra Appl.} {\bf 162/164}(1992), 385-388.
		\bibitem{MaMeng25}  Y. T. Ma and X. Meng. Exact convergence rate of spectral radius of complex Ginibre to Gumbel distribution. arXiv:2501.08039.
		
		\bibitem{MaWang25}  Y. T. Ma and S. Wang. Optimal $W_1$ and Berry-Esseen bound between the spectral radius of large Chiral non-Hermitian random matrices and Gumbel. 	arXiv:2501.08661.
		
		\bibitem{Meckes15} E. Meckes and M. Meckes. A rate of convergence for the circular law for the complex Ginibre ensemble. \emph{Ann. Fac. Sci. Toulouse Math.,}\textbf{ 24}(2015), 93-117.
		
		\bibitem{Poplavskyi17} M. Poplavskyi, R. Tribe and O. Zaboronski. On the distribution of the largest real eigenvalue for the real Ginibre ensemble. \emph{Ann. Appl. Probab.,}\textbf{ 27}(3)(2017), 1395-1413.
		\bibitem{Rider03} B. Rider. A limit theorem at the edge of a non-Hermitian random matrix ensemble. \emph{J. Physics A.,}\textbf{36}(2003), 3401-3409.
		\bibitem{Rider14} B. Rider and C. D. Sinclair. Extremal laws for the real Ginibre ensemble. \emph{ Ann. Appl. Probab.}, \textbf{24}(4) (2014), 1621-1651.
		
		\bibitem{Bai95} J.W. Silverstein and Z. D. Bai. On the Empirical Distribution of Eigenvalues of a Class of Large Dimensional Random Matrices. \emph{J. Multivariate. Anal.,} \textbf{54}(6)(1995), 175-192.
		\bibitem{Sommers} H.-J. Sommers. Symplectic structure of the real Ginibre ensemble. \emph{J. Phys. A} \textbf{40}(2007), F671-F676, MR2371225. 
		
		\bibitem{TaoVu08} T. Tao and V. Vu. Random matrices: The circular law. \emph{ Commun. Contemp. Math.}, \textbf{10}(2008), 261-307.
		
		\bibitem{TaoVu09} T. Tao and V. Vu. Random matrices: Universality of local spectral statistics of non-Hermitian matrices. \emph{Ann. Probab.,} \textbf{38}(5)(2009), 2023-2065. 
		\bibitem {Temme} Temme, N. M. (1996). 
Asymptotic Expansions for the Incomplete Gamma Functions. \emph{
SIAM J. Math. Anal.}, {\bf 27}(1), 20-36.

		%\bibitem{XuZeng24}  Y. Xu and Q. Zeng. Large deviations for the extremal eigenvalues of Ginibre ensembles. https://yyxu3624.github.io/homepage
		
		
	\end{thebibliography}
\end{document}